\newcommand{\s}{\setminus}
\newcommand{\up}{^\uparrow}
\newcommand{\down}{^\downarrow}
\newcommand{\re}{\text{res}}
\newcommand{\F}{\mathcal{F}}
\newcommand{\Z}{\mathcal{Z}}
\newcommand{\J}{\mathcal{J}}
\newcommand{\I}{\mathcal{I}}
\newcommand{\R}{\mathcal{R}}
\newcommand{\RP}{\mathcal{R}(\mathcal{P})}
\newcommand{\RPn}{\mathcal{R}(\mathcal{P})_\nless}
\newcommand{\RPnv}[1]{\mathcal{R}_{#1}(\mathcal{P})_{\nless}}
\newcommand{\RPunv}[1]{\mathcal{R}_{#1}(\mathcal{P\up})_{\nless}}
\newcommand{\RPdnv}[1]{\mathcal{R}_{#1}(\mathcal{P\down})_{\nless}}
\newcommand{\RN}[1]{\mathcal{R}_{#1}}
\newcommand{\CTS}{\text{CT}_s}
\newcommand{\Om}{\mathcal{O}}
\newcommand{\Rn}[1]{\R(#1)_{\nless}}
\newcommand{\Rnv}[2]{\R_{#1}(#2)_{\nless}}
\newcommand{\RNn}[1]{\R_{#1\nless}}
\newcommand{\PP}[1]{\mathcal{P}[#1]}
\newcommand{\FF}{\mathbf{F}}
\newcommand{\FFC}{\mathrm{F}}
\newcommand{\N}{\mathcal{N}}
\newcommand{\cl}[2]{cl_{#1}{#2}}
\newcommand{\h}{\\\hline}
\newcommand{\dd}{\circ}
\newcommand{\bb}{\bullet}
\newcommand{\D}{\square}
\newcommand{\B}{\blacksquare}
\newcommand{\ri}{\rightarrow}
\newcommand{\x}{\bb}
\newcommand{\z}{\dd}
\newcommand{\C}{\mathcal{C}}
\newcommand{\eu}{\text{eu}}
\newcommand{\A}{\mathbb{A}}
\newcommand{\Sim}[2]{\begin{array}{c}\\(#1)\\\sim \\#2 \\ \\\end{array}}
\newcommand{\eq}[2]{\begin{array}{c}\\(#1)\\=\\#2 \\ \\ \end{array}}
\newcommand{\co}{\text{c}}
\newcommand{\e}{\text{e}}
\newcommand{\gC}{C}
\newcommand{\Di}[1]{\text{Dim}(#1)}
\newcommand{\ma}[4]{\begin{array}{|c|c|}\hline#1&#2\h#3&#4 \h\end{array}}
\newtheorem{lem}{Lemma}
\newtheorem{prop}{Proposition}
\newtheorem*{smallc}{Small conjecture}
\newtheorem*{WLem}{Lemma (that turns out to be wrong)}
\newtheorem*{cor}{Corollary }
\newtheorem*{corc1}{Corollary 1 of proposition \ref{smallc}}
\newtheorem*{corc2}{Corollary 2 of proposition \ref{smallc}}
\newtheorem{GC}{C}
\theoremstyle{definition}
\newtheorem{defi}{Definition}
\newtheorem*{Case}{Case}
\theoremstyle{remark}
\newtheorem*{rem}{Remark}
\newtheorem*{slem}{Sublemma}
\newtheorem*{istep}{Step i}
\begin{document}
\title{Convoluted Fourier Coefficients of $GL(n)$-Automorphic Functions. Part 1}
\date{}
\author{Eleftherios Tsiokos}
\maketitle
\begin{abstract} We study certain cases of convoluted Fourier coefficients of $GL_n$-automorphic functions. We establish identities that express them in terms of Fourier coefficients related to unipotent orbits. The most general case that is studied is $(n)\circ(k,2^{n-1})$. The conclusions for this case is only up to a conjecture that I state. However there are certain special cases and other examples that are not based on any conjecture.    \end{abstract}
\section{Introduction}
A Fourier coefficient $\F_1(\varphi)$ of an automorphic function $\varphi$ defined on some group $G$, is an automorphic function in a smaller (not nesesarilly reductive) group. Hence we can consider any Fourier coefficient $\F_2$  in this smaller group and apply it to $\F_1(\varphi)$. Then the result can be interpreted as a Fourier coefficient defined in a unipotent subgroup of $G$ applied to $\varphi$. We call this Fourier coefficient the convolution of $\F_1$ with $\F_2$, and denote it by $\F_2\circ\F_1$

We will use the abbreviation \textbf{FC} for: Fourier coefficient. In this article $G$ will always be the general linear group. If we are given an arbitrary FC on a unipotent subgroup of $GL_n$, one way to attempt to understand it, is to express it in appropriate ways in terms of FCs ``easily linked" to unipotent orbits. These``easily linked" to unipotent orbits FCs form some kind of a generating set. In this article I am pursuing such a way for understanding convoluted FCs. 

One motivation for paying special attention to convoluted FCs is that many other automorphic integrals will unfold to them. New automorphic integrals with analytic continuation in a complex variable will be proved to be Eulerian! I will start discussing this in a sequel to this article. 

The basic prerequisite for developing this theory are certain facts that exist (or will exist soon) in the literature about FCs attached to unipotent orbits. These are discussed in the appendix: \ref{UOrb prerequisites}. 

I give a summary by discussing the next four components of the article.

 \textbf{1: The FCs ``easily linked" to unipotent orbits. }

In the literature there is the concept of attaching a FC to a unipotent orbit. This is not exactly the kind of FCs to form our generating set. For example many of these FCs when they are evaluated in an automorphic representation, they vanish or there is no known way to write them as a finite sum of Euler products. However For each such FC $\F$ we can find other ones that do not vanish for the same representations that $\F$ doesn't vanish, and when applied to certain (minimal nonvanishing for $\F$) automorphic representations we can write them as Euler products! In the appendix in \ref{UOrb prerequisites}, we define a set $\RPnv{0}$ containing such FCs. This is also within what is studied in the literature.

\textbf{2: The FCs that are studied in this paper, and the proofs that are only up to a conjecture.}

The most general FC that is studied is $\F_{n,k}=(n)\circ (k,2^{n-1})$. Here by $(k,2^{n-1})$ we do not mean the $FC$ attached to the orbit that admits the same notation, but a replacement of this FC in the set $\RPnv{0}$.

Most of the examples in this paper will be special cases of $\F_{n,k}$, that we treat before we face $\F_{n,k}$ in general. The proof of $\F_{n,k}$ is up to a conjecture which is denoted by \gC 2 (which appears in (\ref{GCsection})). All the cases of $\F_{n,k}$ with $n<5$ are treated without any conjecture.  We are also calculating $(3)\circ (3^3)$ in which no conjecture is used.
 
 I am not sure yet what is the difficulty of proving \gC 2. I plan in a sequel to this article to write a more elementary proof for $\F_{n,k}$ without using any conjecture. This will have the advantage of giving extra information for $\F_{n,k}$. However at some point \gC 2 or something similar to it must be proved because it will sorten substantially the calculations for FCs that are more complicated from the ones in this paper. For example I am working on FCs such as $(n)\circ (k^n)$ and the elementary ways I am finding for understanding them are quite long.

   I am leaving open the possibility that \gC 2 is wrong, but even if this is the case, I am convinced that something towards these lines is correct, and that it can be applied identically to the FCs in this article, and more generally to convolutions of any two FCs in $\RPnv{0}$.

\textbf{3: The content of the calculations for each FC}
     
The calculations mainly consists of Fourier expansions. Each FC that we study, we express it in terms of better understood FCs, and these identities are established for all automorphic functions simultaneously (idependently of their position in the spectrum). For example in the study of $(3)\circ(2^3)$, (\ref{Fe3circ2^3}) is the identity of the previous sentence, and from it the reader can discern some of the phenomena that happen in general. 

 Even more generally, we will be keeping track of the parabolic subgroups inside which all operations are happening, because this makes our identities valid for all automorphic functions defined in these parabolic subgroups. This extension of the definition of automorphic functions to ones that are invariant only in the rational points of parabolic subgroups is essential for inductive arguments. Since the main aspect of automorphic functions is their periodicity, it should be no surprise that Fourier expansions is the main operation.  
  
  At the first two examples that I will present, the convoluted FCs turn out to be just an element in $\RPnv{0}$ up to adelic integration. For more general convoluted FCs this is no longer the case. The goal in general is to express them as an infinite sum of FCs that belong to a superset of $\RPnv{0}$ that is called $\RPn$ (and is defined in \ref{Appen0}). Then we express these FCs in $\RPn$ in terms of FCs in $\RPnv{_0}$. That an expression as in the previous sentence is possible, is the content of proposition \ref{smallc}. 
  
  Proposition \ref{smallc} is used for $\F_{n,k}$ for $n\geq 4$. In the examples before $(4)\circ(2^4)$, I avoided using proposition \ref{smallc}. I do this so that the reader can fully understand them before reading the proof of this proposition. The way the proposition is avoided each time is just by embedding a proof of it in a very special case. 
  
   When a FC $\F $ is expressed as an infinite sum as previously, we will be concentrating on the summands corresponding to minimal unipotent orbits. We will be able to deduce that $\F(\pi)$ is nonzero Eulerian, for all automorphic representations $\pi$ that correspond to a certain subset of the minimal orbits in the expression of $\F$.

  \textbf{4: The notations that are used throughout the paper.} 
  
  The most important nonstandard notation choice that I have made is the diagrams that I am using to describe the calculation of FCs. I am only calculating the first example $(2)\circ (2^2)$ without any unusual notation, and then the diagrams take over. The notations explanations for the diagrams are in \ref{notations}. 
  
  In the beginning of the appendix I am also introducing several notations that I have not encountered in the literature. I will be happy to receive any suggestions about changing any of them in a next version of the article, so that they are more consistent with notations other people have used.

\subsection{ First example: $\F=(2)\circ (2^2)$}
Let $\varphi $ be any $GL_4$-automorphic function. Then $F(\varphi)=$
$$\int_{F\s A}dy\int_{M_2(F\s A)}\prod dx_{i,j}\varphi\left(\begin{pmatrix}1&&x_{1,1}&x_{1,2}\\&1&x_{2,1}&x_{2,2}\\&&1&\\&&&1\end{pmatrix}\begin{pmatrix}1&y&&\\&1&&\\&&1&y\\&&&1 \end{pmatrix}g\right)\psi_F(x_{1,1}+x_{2,2}+y)  .$$ Then $$F(\varphi)=\sum_{a\in F}\int_{F\s A}dx_{2,1}\int_{(F\s A)^2}dydz\int_{B(F\s A)}\prod_{(i,j)\neq(2,1)}dx_{i,j}$$ $$\varphi\left(\begin{pmatrix}1&&x_{1,1}&x_{1,2}\\&1&&x_{2,2}\\&&1&\\&&&1\end{pmatrix}\begin{pmatrix}1&y+z&&\\&1&&\\&&1&y\\&&&1 \end{pmatrix}\begin{pmatrix}1&&&\\&1&x_{2,1}&\\&&1&\\&&&1\end{pmatrix}g\right)\psi_F(x_{1,1}+x_{2,2}+y+az) $$ 
Now for every $a\in F$, for the contribution of the summand corresponding to $a$, we conjugate with $$\begin{pmatrix}1&&&\\&1&a&\\&&1&\\&&&1\end{pmatrix}. $$ This conjugation together with the changes of variables: 1) $x_{1,1}\rightarrow x_{1,1}-az$ and then 2)$z\rightarrow z-y$, and a use of Fubini's theorem gives $$F(\varphi)=\int_{A}dx_{2,1}\int_{N(F)\s N(A)}dydz\prod_{(i,j)\neq(2,1)}dx_{i,j}$$ $$\varphi\left(\begin{pmatrix}1&z&x_{1,1}&x_{1,2}\\&1&&x_{2,2}\\&&1&y\\&&&1\end{pmatrix}\begin{pmatrix}1&&&\\&1&x_{2,1}&\\&&1&\\&&&1\end{pmatrix}g\right)\psi_F(x_{1,1}+x_{2,2}+y),$$
where $N$ is the unipotent subgroup that is formed by the variables $y,z,x_{i,j}$ with $(i,j)\neq (2,1)$. Finally if we conjugate with the element $r=\begin{pmatrix}1&&&\\&1&&\\&1&1&\\&&&1\end{pmatrix}$ and do the changes of variables: $y\rightarrow y-x_{2,2}$, $z\rightarrow z+x_{1,1}$ we obtain:
$$\int_{A}dx_{2,1}\int_{N(F)\s N(A)}dydz\prod_{(i,j)\neq(2,1)}dx_{i,j}\qquad\qquad\qquad\qquad\qquad\qquad\qquad$$ $$\qquad\qquad\qquad\qquad\varphi\left(\begin{pmatrix}1&z&x_{1,1}&x_{1,2}\\&1&&x_{2,2}\\&&1&y\\&&&1\end{pmatrix}\begin{pmatrix}1&&&\\&1&x_{2,1}&\\&&1&\\&&&1\end{pmatrix}g\right)\psi_F(x_{1,1}+y).$$
Hence we expressed $(2)\circ (2^2)$ in terms of the unipotent orbit Fourier coefficient $(3,1)$. Similarly a formula can be obtained expressing  $(3,1)$ as an adelic integral of $(2)\circ(2^2)$. As a result for certain questions (such as being nonzero or Eulerian, when applied to specific automorphic representations) the two Fourier coefficients behave identically.

In the next examples we will adopt more condensed ways to express the calculation. First of all the size of the previous calculation would have been reduced considerably if we had referred to a ``root exchange lemma" that exists in the literature that is discussed in (\ref{notdiagrams}, \ref{exchange}).   Beyond that we will make use of certain diagrams  that are explained in \ref{notdiagrams}. As a demonstration of what these condensed notations will be like, the case of $(2)\circ(2^2)$ that just took us a whole page to write down is expressed as follows:

$$\begin{array}{|c|c|c|c|}\hline 1&\B&\x&\z\h &1&\z&\x\h&&1&\B\h&&&1\h \end{array}
\Sim{1}{\eu(Y,X)}
\begin{array}{|c|c|c|c|}\hline 1&\z&\x&\z\h &1&&\x\h&&1&\x\h &&&1\h\end{array}
\Sim{2}{\co(r)}
\begin{array}{|c|c|c|c|}\hline 1&\z&\x&\z\h &1&&\z\h&&1&\x\h &&&1\h\end{array}$$
where $Y=(1,2)$, $X=(2,3)$ and $r\in U_{3,2}$.
What each of these symbols mean is explained in the notations that follow. 
\subsection{Notations, Part 1} 
\label{notations}
Here I introduce the notations that start to appear very early in the article. More notation is at the beginning of the appendix. 
\subsubsection{Notations and assumptions for groups.}\label{notgroups}
\begin{enumerate}
\item A number field $F$ is fixed throughout the paper.
\item\label{rational} All groups will be assumed to be algebraic. All groups and morphisms of them are assumed to be defined over the number field $F$. By a parabolic subgroup of $GL_n$ we mean a standard one. Similarly the Levi components are chosen to be standard.
\item By $U_n$ we will mean the upper triangular unipotent matrices of $GL_n$. More generally the unipotent radical of a parabolic subgroup $P$ of $GL_n$, will be denoted by $U_P$. 
\item By $U_{(i,j)}$ we mean the root subgroup with elements having as nonzero unipotent entry the entry (i,j). If $a$ is the corresponding root, $U_{(i,j)}$ will also be denoted by $U_{a}$.
\item  We will identify each orbit with the partition $(n_1,n_2,...n_k)$ to which it corresponds. The set of unipotent orbits of $GL_n$ (or equivalently the set of partitions of $n$) will be denoted by $\N_n$. The order that is defined in $\N_n$ is the standard one.

 In the notation $(n_1,n_2,...,n_k)$ we do not demand that $n_i\geq n_{i+1}$. For the convenience of the reader we do write them down in this order in the first few examples, but we gradually stop doing it. In the example $(5)\circ(2^5)$ we start using a certain order for the $n_i$, which depends on a parabolic subgroup. This order is discussed in \ref{Appen0} (definition \ref{U/UU}). 
\end{enumerate}

\subsubsection{Notations for FCs.}\label{notFC}
\begin{enumerate}
\item FC will be an abbreviation for Fourier coefficient (and FCs for Fourier coefficients)
\item Let $N$ be a unipotent group. Let $\psi:N(F)\s N(\A)\ri\mathbb{C}$ be a character. Then $N$ is called the domain of $\psi$, and is denoted by $D_\psi$.  The FC defined by $\psi$ is called $\F_\psi$.

  Similarly let $\F$ be a FC. By $\psi_\F$ we denote the character that defines $\F$. $D_{\psi_\F}$ will be called the domain of $\F$, and it will be denoted by $D_\F$.  

\item A FC $\F$ is called a $GL_n$-FC, if $D_\F\subset GL_n$. 
\item A restriction of a FC $\F$ to a subgroup $N$ of $D_{\F}$, is denoted by $\F_{N}$. Similarly a restriction of a character $\psi$, to a subgroup $N$ of $D_\psi$ is denoted by $\psi_N$.
\item Let $\F$ be a $GL_n$-FC and $\pi$ a $GL_n$-automorphic representation. Then we say that $\F(\pi) $ is nonzero or Eulerian, if there is $\varphi\in\pi$ for which $\F(\varphi)$ is nonzero or Eulerian respectively. Of course when $\pi $ is irreducible, the statement: $\F(\varphi)$ is nonzero for one $\varphi\in\pi$, is equivalent to the statement: $\F(\varphi)$ is nonzero for all nonzero $\varphi\in\pi$.
\item\label{notFCconv} We frequently express convolutions of FCs as convolutions of orbits. When we write a convolution of such orbits, say $a\circ b$, we will mean $\F_a\circ\F_b$, where $\F_b$ and $\F_a$ are defined in the next two paragraphs. The reader will need to consult \ref{UOrb prerequisites} for the definitions of $\RPunv{0}$ and $\RPdnv{0}$.

Let $GL_m$ be the group over which $b$ is defined. For $b$ we choose a FC  $\F_b\in\RPunv{0}\cup\RPdnv{0}$, such that $\Phi(\F_b)=b$. In some cases we take $\F_b\in\RPunv{0}$ (such as $(2)\circ(k,2^2)$), and in other cases we take $\F_b\in\RPdnv{0}$ (such as when we study in general the convolution $(n)\circ(k,2^{n-1})$). In each example we choose the one among these two sets that is the most convenient. Due to the outer automorphism of the general linear group $g\ri g^{-t}$ we know that our results will not depend on this choice.

 As for $a$ in this article it is always $(n)$, where $n$ is the number of terms of the partition $b$. Due to this we can avoid describing in general a correspondence $a\ri\F_a$. In this article $\F_a$ is determined by the properties: \begin{itemize}\item $D_{\F_a}$ is the upper triangular unipotent  matrices of the subgroup of $GL_m$ that stabilize $\F_b$,\item  $\F_a$ is chosen to be generic among the characters of $D_{\F_a}$. 
 \end{itemize}  
\end{enumerate}

\subsubsection{Notations for diagrams.}\label{notdiagrams}
\begin{enumerate}
\item $\dd$  will mean that we integrate against the trivial character
\item$\bb$ will mean that we integrate against a nontrivial character
\item$\bb_a$ is like the $\bb$ case, where $a\in F^*$ parametrizes all nontrivial characters. This notation will be used in cases that for different choices of $a\in F^*$ the FCs that we obtain will turn out to be nonconjugate. The first such example will occur in  $(3)\circ(2^3)$. 
\item$\D$ will mean that we integrate against  the trivial character, and that the variable is the same in certain root spaces. In each case, it can be guessed correctly from the context, in which root spaces the variable is the same.  I skip the details. 
\item$\B$ is like $\D$ but the character is nontrivial
\item By adelic integral we mean any integral of the form $\int_{X(\A)}$ for $X$ being a variety. Let an automorphic integral $I$ be an adelic integral of an Eulerian automorphic integral. Then $I$ is also Eulerian. This is the way the concept of adelic integral will mostly be mentioned. 
\item \label{exchange} The symbol $\eu(Y,X)$, or just $\eu$, will be used for denoting a calculation that Ginzburg, Rallis and Soudry call``root exchange". $Y,X$ will be two abelian unipotent subgroups. Let $L$ be a unipotent group that has $X$ as a normal subgroup. Let $\psi_L$ be a character on $L(F)\s L(\A)$. Assume that $L=X\rtimes L_0$, and that $Y$ normalizes $L_0$. Define $L^\prime:=Y\rtimes L_0$. Let $\psi_{L_0}$ be the restriction of $\psi_L$ to $L_0$. Assume that both $X$ and $Y$ fix $\psi_{L_0}$ when acting by conjugation. This means that we can trivially extend $\psi_{L_0}$ to $L^\prime$, and we call this character $\psi_{L^\prime}$.Finally assume that $[X,Y]\subset N_0$ and that there is a nondegenerate bilinear pairing $X\times Y\rightarrow \mathbb{C}$ given by $(x,y)\rightarrow\psi_{L_0}([x,y])$. Then it is proved in \cite{GRS}  (Lemma 7.1), that $\F_{\psi_L}$ and $\F_{\psi_{L^\prime}}$, when applied to the same automorphic function, they are the same up to an adelic integration. They state this lemma for automorphic functions of classical groups, but the proof is not different for $GL_n$, and it can also be found in \cite{JiangLiu} (Lemma 5.2, page 4051). Expressing $\F_{\psi_L}$ as an adelic integral of $\F_{\psi_{L^\prime}}$ by following this procedure, is expressed in the condensed notation that we will adopt as follows: $$\F_{\psi_L}\begin{array}{c}\sim\\\eu\end{array}
\F_{\psi_{L^\prime}}. $$ 
In many cases we will omit defining $Y$ and $X$, because it will be possible to understand how they are chosen from the rest of the diagram. 
\item In the diagrams with $\co$ we mean that we did a certain conjugation. Some times we write $\co(r)$ to say that $r$ is the element that we are conjugating. In some among the most complicated cases, we describe the element $r$ inside the diagram computations(this is what we do for example in several steps for the FC $(3)\circ (3^3)$). 
\item For two Fourier coefficients $\F_1$ and $\F_2$, we write $\F_1\sim\F_2$ if each one is obtained from the other by steps $\eu$ and $\co$.
\item The letter $\e$, will also be used in the diagrams, in a similar way with $\eu$ and $\co$. The letter $\e$ means that we did a Fourier expansion. The names are chosen this way because in a $\eu$ step we \textbf{e}xpand  and then \textbf{u}nfold, whereas in a $\e$ step we just \textbf{e}xpand.
\item In the diagrams, usually in each diagonal entry we put $1$. However in certain situations we put the number of the row in them. I usually do this for the most complicated steps. We first encounter this notation in $(3)\circ(3^3)$.    
\item\label{numbering} Notice that in the diagrams we are numbering the steps. After steps of the form $\e$ the numbering starts from the beginning for each summand. If $H$ is the FC at the beginning of an example or any of the summands of a step $\e$, we denote by $H^{(k)}$ the FC obtained from $H$ after the $k$th-step.
\item The Fourier coefficients in the diagrams are applied to automorphic functions of any parabolic subgroup of $GL_n$, within which the steps $\eu$, $\e$ and $\co$ are defined.
\end{enumerate}
\subsection*{Example 2: Extending $(2)\circ(2^2)$ to $(2)\circ(k_1,k_2)$ for $k_2\geq 2$}
For the orbit $(k_1,k_2)$ we choose $\RPunv{0}$ (recall definition 6 of \ref{notFC}).
This extension of the first example is essentially the same with it. The pictures are for the case $(2)\circ (4,3)$ but the rest of the description is for the general case.
$$\begin{array}{|c|c|c|c|c|c|c|}\hline 1&\B&\x&\z&\z&\z&\z\h
&1&\z&\x&\z&\z&\z\h
&&1&\B&\x&\z&\z\h
&&&1&\z&\x&\z\h
&&&&1&\B&\z\h
&&&&&1&\x\h
&&&&&&1\h
\end{array}\Sim{1}{\eu} 
\begin{array}{|c|c|c|c|c|c|c|}\hline 1&\z&\x&\z&\z&\z&\z\h
&1&&\x&\z&\z&\z\h
&&1&\z&\x&\z&\z\h
&&&1&&\x&\z\h
&&&&1&\x&\z\h
&&&&&1&\x\h
&&&&&&1\h
\end{array}
\Sim{2}{c}
\begin{array}{|c|c|c|c|c|c|c|}\hline 1&\z&\x&\z&\z&\z&\z\h
&1&&\x&\z&\z&\z\h
&&1&\z&\x&\z&\z\h
&&&1&&\z&\z\h
&&&&1&\x&\z\h
&&&&&1&\x\h
&&&&&&1\h
\end{array}$$

\begin{itemize}
\item In step (1), $\eu=\eu(Y,X)$, with $Y=\oplus V_{2i-1,2i}$ for $i=1,2,...k_2-1$, and $X=\oplus V_{2i,2i+1}$ with $i=1,2,...k_2-1$.
\item In step (2), the conjugation is with an appropriate element in $U_{(2k_2-1,2k_2-2)}$.  
\end{itemize} 
Hence we have $(2)\circ(k_1,k_2)\sim (k_1+1,k_2-1)$.
\section{Examples that ``equally correspond" to more than one unipotent orbits.}

\subsection*{Example 3: $\F_3:=(3)\circ(2^3)$ and then $\F_{3,k}:=(3)\circ(k,2^2)$.} . 
$$\F_3:=\begin{array}{|c|c|c|c|c|c|c|}\hline 1&\B&\D&\bb&\dd&\dd\h&1&\B&\dd&\bb&\dd\h&&1&\dd&\dd&\bb\h&&&1&\B&\D\h&&&&1&\B\h&&&&&1\h\end{array}\Sim{1}{\eu}\begin{array}{|c|c|c|c|c|c|c|}\hline 1&\dd&\dd&\bb&\dd&\dd\h&1&\dd&&\bb&\dd\h&&1&&&\bb\h&&&1&\bb&\dd\h&&&&1&\bb\h&&&&&1\h\end{array}\Sim{2}{\co}$$ $$
\begin{array}{|c|c|c|c|c|c|c|}\hline 1&\dd&\bb&\dd&\dd&\dd\h&1&&\dd&\bb&\dd\h&&1&&\bb&\dd\h&&&1&&\bb\h&&&&1&\bb\h&&&&&1\h\end{array}\eq{3}{\e}
\F_{3,0}
+\sum_a \F_{3,a} 
$$\\
$\text{where}\qquad \F_{3,0}:=\begin{array}{|c|c|c|c|c|c|c|}\hline 1&\dd&\bb&\dd&\dd&\dd\h&1&&\dd&\bb&\dd\h&&1&\dd&\bb&\dd\h&&&1&&\bb\h&&&&1&\bb\h&&&&&1\h\end{array}\quad\text{and}\quad \F_{3,a}:=\begin{array}{|c|c|c|c|c|c|c|}\hline 1&\dd&\bb&\dd&\dd&\dd\h&1&&\dd&\bb&\dd\h&&1&\bb_a
&\bb&\dd\h&&&1&&\bb\h&&&&1&\bb\h&&&&&1\h\end{array}$ .\\
We first study $\F_{3,0}$. 
$$\F_{3,0}\Sim{1}{c}\begin{array}{|c|c|c|c|c|c|c|}\hline 1&\dd&\bb&\dd&\dd&\dd\h&1&&\dd&\dd&\dd\h&&1&\dd&\bb&\dd\h&&&1&&\dd\h&&&&1&\bb\h&&&&&1\h\end{array}\Sim{2}{c} 
\begin{array}{|c|c|c|c|c|c|c|}\hline 1&\dd&&&\dd&\dd\h&1&&&&\dd\h\dd&\dd&1&\bb&\dd&\dd\h&\dd&&1&\bb&\dd\h&&&&1&\bb\h&&&&&1\h\end{array}\Sim{3}{\eu} 
\begin{array}{|c|c|c|c|c|c|c|}\hline 1&\dd&&\dd&\dd&\dd\h&1&&\dd&\dd&\dd\h&&1&\bb&\dd&\dd\h&&&1&\bb&\dd\h&&&&1&\bb\h&&&&&1\h\end{array}$$
$$\sim (\text{constant term})\circ (4,1^2).$$ What is really important is that these calculations also prove the stronger statement: $\F_{3,0}\sim \CTS\circ (4,1^2)$ where by $\CTS$ we denote certain ``well behaving" constant terms that will be defined after calculating all the $\F_{3,a}$. They will not be used in any way before their definition is given.\\

As $a$ varies in $F^*$, $\F_{3,a}$ doesn't remain the same up to conjugation. This fact is what is leading to more than one minimal orbits in the final formula. There is a unique $a_1$ so that when $a\neq a_1$:\\
$\F_{3,a}\Sim{1}{\co} \begin{array}{|c|c|c|c|c|c|c|}\hline 1&\dd&\bb&\dd&\dd&\dd\h&1&&\dd&\bb&\dd\h&&1&\bb
&\dd&\dd\h&&&1&&\bb\h&&&&1&\bb\h&&&&&1\h\end{array}\Sim{2}{\co} \begin{array}{|c|c|c|c|c|c|c|}\hline 1&\dd&\bb&\dd&\dd&\dd\h&1&&\dd&\bb&\dd\h&&1&\bb
&\dd&\dd\h&&&1&&\bb\h&&&&1&\dd\h&&&&&1\h\end{array}\sim (4,2)$\\
and when $a=a_1$\\
$\F_{3,a_1}\Sim{1}{\co}\begin{array}{|c|c|c|c|c|c|c|}\hline 1&\dd&\bb&\dd&\dd&\dd\h&1&&\dd&\bb&\dd\h&&1&\bb
&\dd&\dd\h&&&1&&\dd\h&&&&1&\bb\h&&&&&1\h\end{array}\Sim{2}{\co}
\begin{array}{|c|c|c|c|c|c|c|}\hline 1&\dd&\bb&\dd&\dd&\dd\h&1&&\bb&\dd&\dd\h&&1&\dd
&\bb&\dd\h&&&1&&\bb\h&&&&1&\dd\h&&&&&1\h\end{array}\Sim{3}{\eu} 
 \begin{array}{|c|c|c|c|c|c|c|}\hline 1&\D&\bb&\dd&\dd&\dd\h&1&\dd&\bb&\dd&\dd\h&&1&
 \D&\bb&\dd\h&&&1&\dd&\bb\h&&&&1&\D\h&&&&&1\h\end{array}$\\ \\
 $\sim (\text{constant term})\circ (3,3)$. As in the case with $\F_{3,0}$ here we have $\F_{3,a_1}\sim \CTS\circ (3,3)$.
  
 So far we have \begin{equation}\label{Fe3circ2^3}\F_3=(\text{constant term})\circ(4,1^2)+(\text{constant term})\circ(3,3)+\sum_{a\in F-{0,a_1}}(4,2).\end{equation} The minimal orbits appearing in this expression are $\{(4,1^2), (3,3)\} $. Assume $\pi$ is a $Gl_6$-automorphic representation with $\Om(\pi)=(3,3)$ (the argument is the same for the other minimal orbit). Then (\ref{Fe3circ2^3}) for $\varphi\in\pi$ becomes:
 \begin{equation}\F_3(\varphi)(g)=(\text{constant term})\circ(3,3)\varphi(g). \end{equation}
 
 Since $\Om(\pi)=(3,3)$, $\varphi$ can be chosen so that $(3,3)\varphi $ is nonzero Eulerian. For the specific case of $(3,3) $ that we encountered, it is clear from it's definition that whenever it is applied to a $GL_6$ automorphic function with central character, it gives a $GL_2$-automorphic function with central character. This $GL_2$ automorphic function, since it is also Eulerian, it has to be a Hecke character convolved with $\det$. As a result it is constant on unipotent matrices which gives: \begin{equation}\label{constant3circ2^3}\F_3(\varphi)=(\text{constant term})\circ (3,3)\varphi=(3,3)\varphi.\end{equation} 
 
 One of the least technical outcomes of the calculations so far is: 
 \begin{cor}$(3)\circ (2^3) $ is nonzero Eulerian for any of the $GL_6$automorphic representations $\pi$ that satisfy $\Om(\pi)=(4,1^2)\text{ or }(3,3)$.
 \end{cor}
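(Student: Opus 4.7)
The plan is to combine the already-established identity (\ref{Fe3circ2^3}) with two standard facts from the appendix: (a) for any $\pi$ with $\Om(\pi)=\lambda$ and any orbit $\mu\not\le\lambda$ in the dominance order, the FC attached to $\mu$ vanishes on $\pi$, and (b) the FC attached to $\lambda$ itself can be chosen nonzero on $\pi$. Applied to (\ref{Fe3circ2^3}), these two facts should collapse the expansion to a single surviving summand, after which the argument leading to (\ref{constant3circ2^3}) will dispose of the remaining constant term.

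First I would tabulate the dominance order for the three orbits $(4,1^2)$, $(3,3)$ and $(4,2)$ that occur on the right-hand side of (\ref{Fe3circ2^3}); their partial sums are $(4,5,6)$, $(3,6,6)$ and $(4,6,6)$ respectively, so $(4,1^2)$ and $(3,3)$ are incomparable while $(4,2)$ strictly dominates both. Consequently, if $\Om(\pi)=(3,3)$ the $(4,1^2)$- and $(4,2)$-summands vanish on $\pi$, and if $\Om(\pi)=(4,1^2)$ the $(3,3)$- and $(4,2)$-summands vanish. In either case exactly one summand of (\ref{Fe3circ2^3}) survives, namely $(\text{constant term})\circ \Om(\pi)\,\varphi$.

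Next I would dispose of the residual constant term by copying the argument used for (\ref{constant3circ2^3}). For both orbits $(3,3)$ and $(4,1^2)$ the corresponding inner FC sends a $GL_6$-automorphic function with central character to a $GL_2$-automorphic function with central character. Choosing $\varphi\in\pi$ so that this inner FC is nonzero Eulerian (allowed by (b)), the resulting $GL_2$-automorphic function is Eulerian with a central character, hence a Hecke character convolved with $\det$, hence constant on unipotent matrices. The ambient $\CTS$ therefore acts trivially, giving $\F_3(\varphi)=\Om(\pi)\varphi$, which is both nonzero and Eulerian by construction.

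The one point requiring some care is confirming that the specific $\CTS$ occurring in the reductions $\F_{3,0}\sim\CTS\circ(4,1^2)$ and $\F_{3,a_1}\sim\CTS\circ(3,3)$ really is an integration over a unipotent subgroup whose image in the stabilizer Levi $GL_2$ lies inside a unipotent of $GL_2$, so that the Hecke-character argument applies verbatim. I expect this to be a routine diagrammatic verification rather than a genuine obstacle, since the diagrams set up in \ref{notdiagrams} already track exactly this information.
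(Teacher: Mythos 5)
Your proposal is correct and takes essentially the same route as the paper: you use the identity (\ref{Fe3circ2^3}), the vanishing of the summands attached to orbits not dominated by $\Om(\pi)$ (you merely make explicit the dominance comparisons the paper leaves implicit), and then the Hecke-character argument showing the residual constant term acts trivially, which is exactly the paper's argument for (\ref{constant3circ2^3}), applied to both minimal orbits just as the paper does. The caveat you flag about the precise shape of the $\CTS$ is treated in the paper at the same level of detail (and is later subsumed by Lemma \ref{CTS}), so there is no divergence of method.
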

 The explanation that I just gave for (\ref{constant3circ2^3}) cannot be generalized for many of the next Fourier coefficients that we will consider in the place of $(3)\circ (2^3)$. For example it cannot be generalized to $(3)\circ(k,2^2)$ with $k>2$, because in the calculation of $\F_{3,a_1}$ after the step (3), we obtain a Fourier coefficient that doesn't send $GL_6$-automorphic functions to $GL_2$-automorphic functions (They will be only $B(F)$-invariant, for $B$ being the standard Borel of $GL_2$). 
 
   In the beginning of  example 3, I said that it's generalization to $(3)\circ(k,2^2)$ is identical. And it will be, after I establish a general enough lemma about cases that constant terms contribute trivially. But first I will state such a lemma that turns out to be wrong!  
   \begin{WLem} Let $\F$ be a Fourier coefficient in a unipotent subgroup of $GL_n$. Let $H$ be an algebraic subgroup of $GL_n $ so that for every $GL_n$-automorphic function $\phi$, $\F(\phi)$ is an $H(F)$-invariant function. Let $\mathcal{C}$ be a FC which is the constant term for  an algebraic unipotent subgroup of $H$. Let $\varphi$ be a $GL_n$-automorphic form for which $\F(\varphi)$ is a nonzero Eulerian $GL_n(\A)-$function. Then $\mathcal{C}\circ\F(\varphi)=\F(\varphi)$.\end{WLem}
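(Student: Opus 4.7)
The plan is to reduce the assertion to the claim that $\F(\varphi)$ is actually invariant under $U(\A)$, where $U\subset H$ is the algebraic unipotent subgroup whose constant term is $\mathcal{C}$. Once this stronger invariance is in hand, Fubini and the standard volume-one normalization of Haar measure on $U(F)\s U(\A)$ give $\mathcal{C}\circ\F(\varphi)(g)=\int_{U(F)\s U(\A)}\F(\varphi)(ug)\,du=\F(\varphi)(g)$ directly. By hypothesis $\F(\varphi)$ is $H(F)$-invariant, hence $U(F)$-invariant, so the problem is precisely to upgrade rational invariance along $U$ to adelic invariance, and the Eulerian hypothesis on $\F(\varphi)$ is the only additional input available to do so.

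To carry out this upgrade, I would fix $g^{(0)}\in GL_n(\A)$ with $\F(\varphi)(g^{(0)})\neq 0$ and invoke the Eulerian hypothesis to write $\F(\varphi)(g)=\prod_v f_v(g_v)$ in a neighbourhood of $g^{(0)}$, so that $f_v(g^{(0)}_v)\neq 0$ for every $v$. For a local element $u_{v_0}\in U(F_{v_0})$, I would appeal to strong approximation for the unipotent group $U$, which asserts that $U(F)\cdot U(F_{v_0})$ is dense in $U(\A)$; approximating $u_{v_0}^{-1}$ away from $v_0$ by an element of $U(F)$ and then applying the $U(F)$-invariance that we already have, one would pass to the limit to extract the local identity $f_{v_0}(u_{v_0}g^{(0)}_{v_0})=f_{v_0}(g^{(0)}_{v_0})$. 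Doing this at every place and reassembling the product would yield the desired $U(\A)$-invariance of $\F(\varphi)$.

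The hard part, and the step where I expect the argument to collapse, is precisely the limit in the previous paragraph. Transporting rational invariance through an Euler product requires the tail $\prod_{v\neq v_0}f_v$ to vary continuously under the perturbation furnished by strong approximation, and nothing in the hypotheses provides such a mechanism; the Euler product is a global identity, not a compatible family of local continuities. Symptomatically, a generic cuspidal eigenform on $GL_2$ is $U_2(F)$-invariant and Eulerian in the standard (Whittaker-coefficient) sense, yet its constant term along $U_2$ vanishes rather than reproducing the function, which is a clean contradiction to the lemma as stated. So the correct replacement will either have to strengthen "Eulerian" to an honest product decomposition of $\F(\varphi)$ itself as a continuous function on $GL_n(\A)$, or restrict $H$ and $\mathcal{C}$ so that the unipotent direction inside $H$ is already controlled by the factorization, and in either case the naive approach above will need to be supplemented at precisely the step that takes rational invariance to adelic invariance.
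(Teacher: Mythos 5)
You correctly recognized that the statement is false, and that is exactly the paper's own position: it is presented as a ``lemma that turns out to be wrong'' and its ``proof'' is a counterexample. Your refutation, however, takes a different route. The paper stays inside its own machinery: it takes the $GL_6$ coefficient $\F\sim(3,2,1)$, lets $H$ be the one-dimensional unipotent group $U_{(2,4)}$ and $\C$ the constant term over it, observes that $\C\circ\F$ is the coefficient $\F_3^{(2)}\sim(3)\circ(2^3)$ already computed in Example 3, and chooses $\varphi$ in a representation $\pi$ with $\Om(\pi)=(3,2,1)$; then $\F(\varphi)$ is nonzero Eulerian while $(3)\circ(2^3)\varphi=0$ because $\varphi$ vanishes on the minimal orbits $(4,1^2)$ and $(3,3)$, so $\C\circ\F(\varphi)=0\neq\F(\varphi)$. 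Your $GL_2$ example isolates the same underlying mechanism more nakedly: the restriction of $\F(\varphi)$ to the unipotent direction can be a nontrivial character, trivial on $F$-points but not on $\A$-points, so the constant term annihilates it --- which is precisely what the paper's remark after its counterexample observes for $U_{(2,4)}$ and the character $\psi_1$. One repair is needed in your wording, though: a cuspidal eigenform itself is not an Eulerian function, so ``Eulerian in the Whittaker-coefficient sense'' does not literally satisfy the hypothesis that $\F(\varphi)$ be Eulerian. To meet the hypotheses you should take $\F$ to be the Whittaker coefficient on $U_2\subset GL_2$ and $H=U_2$: then $\F(\phi)$ is $U_2(F)$-invariant for every automorphic $\phi$, $\F(\varphi)$ is a genuine product of local Whittaker functions for a factorizable cusp form, and its constant term along $U_2$ vanishes, giving a valid and in fact simpler counterexample than the paper's. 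Finally, your diagnosis of where a direct proof collapses (rational invariance along $U$ cannot be upgraded to adelic invariance from the Euler product alone) and your guess at the correct fix match the paper's resolution: the repaired Lemma \ref{CTS} requires $H=T\rtimes D_\C$ with a torus acting with an open orbit, and it is exactly this torus action, combined with continuity and the discreteness of the character group of $F\s\A$, that forces the character along the unipotent direction to be trivial in the corrected statement.
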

   \begin{proof}[Counterexample] Consider the $GL_6$ Fourier coefficient: $$\F:=
   \begin{array}{|c|c|c|c|c|c|c|}\hline 1&\dd&\bb&\dd&\dd&\dd\h&1&&&\bb&\dd\h&&1&&\bb&\dd\h&&&1&&\bb\h&&&&1&\bb\h&&&&&1\h\end{array}\quad.$$
   By conjugating with an appropriate element we see that $\F\sim (3,2,1)$. Notice that for any $GL_6$-automorphic function $\varphi$, $\F(\varphi)$ is $H(F)$-invariant, where $H$ is the 1-dimensional unipotent subgroup $U_{(2,4)}$. Let $\mathcal{C}$ be the constant term FC over $H$. Then notice that $\mathcal{C}\circ\F$ is the FC $\F_3^{(2)}$ that we obtained after the second step in the calculation of $\F_3$ (recall definition \ref{numbering} in \ref{notdiagrams}). This implies that $\mathcal{C}\circ\F\sim (3)\circ (2^3)$. Now assume that $\varphi$ belongs to a $GL_6$-automorphic representation $\pi$ with $\Om(\pi)=(3,2,1)$. Then we know that $\F(\varphi)$ is nonzero. We also know that $\varphi$ can be chosen (and we do choose it this way) so that $\F(\varphi)$ is Eulerian. $\varphi$ vanishes for both $(4,1^2)$ and $(3,3)$, which implies $$\C\circ\F(\varphi)=(3)\circ(2^3)\varphi=0\neq \F(\varphi). $$ \end{proof}
   \begin{rem}By calculating $\F_{\psi}\circ\F(\varphi)$ for all nontrivial characters $\psi: U_{(2,4)}(F)\s U_{(2,4)}(\A)\ri\mathbb{C}$ we see that it is nonzero only for one of them (we call it $\psi_1$), which implies that $F_1(\varphi)$ when restricted to $U_{(2,4)} $ is $\psi_1$. I skip the details since this observation will not be used again in this article. \end{rem}
   
   A way to correct the previous lemma is to restrict to constant terms $\C$ that we will call $\CTS$ and are defined as follows:
    \begin{defi}\label{CTSdef}Let $\F,H,C$ be defined as in the wrong lemma. Further assume that $H$ is of the form $T\rtimes D_C$, where $T$ is a torus, that acts by conjugation on $D_C$  with one of the orbits being open. Then we say that $C$ is a $\CTS(\F,H)$ FC. Frequently it will be clear from the context how part of the data $F_1, H$  is chosen and in such cases notations such as $\CTS,\CTS(\F),\CTS(H) $ will be used instead of $\CTS(\F,H)$.\end{defi}
    
   \begin{lem}\label{CTS}Let $\F$ be a Fourier coefficient in a unipotent subgroup of $GL_n$. Let $H$ be an algebraic subgroup of $GL_n $ so that for every $GL_n$-automorphic function $\phi$, $\F(\phi)$ is an $H(F)$-invariant function. Assume that $\C$ is a $\CTS(\F,H)$ (this of course implies that $H$ has the form that it had in Definition \ref{CTSdef}). Let $\varphi$ be a $GL_n$-automorphic form for which $\F(\varphi)$ is nonzero Eulerian. Then $\C\circ\F(\varphi)=\F(\varphi).$  \end{lem}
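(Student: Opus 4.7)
My approach would be to Fourier-expand $\F(\varphi)$ along $D_\C$ and use the $T(F)$-invariance together with the open-orbit hypothesis to annihilate every nontrivial piece.

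Since $D_\C(F)\subset H(F)$ fixes $\F(\varphi)$ on the left, one has a Fourier expansion on $D_\C(F)\s D_\C(\A)$ (taking $D_\C$ abelian for concreteness; otherwise one filters by the lower central series and iterates the argument):
$$\F(\varphi)(g) \;=\; \C\circ\F(\varphi)(g) \;+\; \sum_{\psi\neq 1}\F_\psi\circ\F(\varphi)(g).$$
It thus suffices to show that every summand with $\psi\neq 1$ vanishes.

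Next, I would exploit $T(F)$-equivariance. Using the $T(F)$-invariance of $\F(\varphi)$ and the product formula (which trivializes adelically the Jacobian of conjugation by a rational element), a short change of variable yields
$$\F_{\psi^t}\circ\F(\varphi)(g) \;=\; \F_\psi\circ\F(\varphi)(tg), \qquad t\in T(F),$$
so the contribution of a whole $T(F)$-orbit of characters is a translate-sum of a single coefficient. Partition the nontrivial characters into $T(F)$-orbits. The $\CTS$ hypothesis says exactly that one orbit $\Omega$ is Zariski-dense in the character variety of $D_\C$, while the remaining characters lie in a proper closed $T$-invariant subvariety.

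The heart of the argument --- and the step I expect to be the main obstacle --- is to show $\F_\psi\circ\F(\varphi)\equiv 0$ for $\psi\in\Omega$. The composition $\F_\psi\circ\F$ is a Fourier coefficient defined on the larger unipotent group $D_\F\cdot D_\C$, and since $\psi$ is generic for the $T$-action its added character is a nondegenerate refinement of $\psi_\F$; by the correspondence between Fourier coefficients and unipotent orbits recalled in \ref{UOrb prerequisites}, this enlarged coefficient is attached to an orbit strictly dominating the one associated to $\F$. The nonzero-Eulerianness of $\F(\varphi)$ is used here via a uniqueness-of-model / maximality input: Eulerianness pins down the orbit of $\pi$ as the one already captured by $\F$, so any strict refinement must vanish on $\varphi$. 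Characters lying in non-open $T(F)$-orbits I would handle by an inductive descent on the dimension of the orbit closure, reducing to the same assertion for smaller instances; note that this strategy cannot be misapplied to the counterexample of the wrong lemma, since there $T$ is trivial and the open-orbit condition fails outright.
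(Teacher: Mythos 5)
The opening moves are fine: the expansion of $\F(\varphi)$ along $D_\C(F)\s D_\C(\A)$ and the grouping of nontrivial characters into $T(F)$-orbits via the $T(F)$-invariance are both legitimate. The gap is in the heart of your argument, the vanishing $\F_\psi\circ\F(\varphi)=0$ for nontrivial $\psi$. You justify it by saying that $\F_\psi\circ\F$ is attached to a strictly larger unipotent orbit and that ``Eulerianness pins down the orbit of $\pi$,'' but the lemma supplies no representation $\pi$ and no link between $\F$ and unipotent orbits: $\F$ is an arbitrary FC in a unipotent subgroup (it need not lie in $\RNn{U_n}$, nor even in $\R_{\nless}$), $\varphi$ is just an automorphic form, and the only hypothesis is that $\F(\varphi)$ is nonzero Eulerian. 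Moreover a refinement of $\F$ by one extra character is not in general an orbit FC to which the vanishing dichotomy of \ref{UOrb prerequisites} applies (compare the example at the end of \ref{smallcsubsection}). In fact the paper's counterexample to the ``wrong lemma'' refutes your principle directly: there $\F\sim(3,2,1)$, $\F(\varphi)$ is nonzero Eulerian, and the remark following it observes that exactly one nontrivial character $\psi_1$ on $U_{(2,4)}$ gives $\F_{\psi_1}\circ\F(\varphi)\neq 0$, even though this is precisely such a ``strict refinement.'' Your disclaimer that $T$ is trivial in that example does not rescue the argument, because your vanishing mechanism nowhere uses the torus or the open-orbit condition; as written it would ``prove'' the wrong lemma as well. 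The proposed ``inductive descent'' for characters in non-open orbits is similarly without content, since no inductive statement or mechanism is given.

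For comparison, the paper makes the open-orbit torus do all the work, and in a different way: since a rational representation of a torus decomposes into characters, one reduces to $\dim T=\dim D_\C=1$, i.e.\ to a continuous Eulerian function $f$ on $B_1(F)\s B_1(\A)$ with $B_1$ the group of matrices $\begin{pmatrix}*&*\\&1\end{pmatrix}$. Eulerianness is used to show that for each $g$ the function $x\mapsto f\left(\begin{pmatrix}1&x\\&1\end{pmatrix}g\right)$ is an additive character of $F\s\A$; if it were nontrivial for some $g$, the $F$-rational torus translations would move it through all nontrivial characters as $g$ varies, contradicting the continuity of $f$ together with the discreteness of the character group. Hence $f$ is constant along the unipotent direction and $\C\circ\F(\varphi)=\F(\varphi)$ follows at once, with no appeal to orbit dominance or vanishing properties of automorphic representations. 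To salvage your outline you would have to replace the orbit-dominance step by an argument of this analytic kind.
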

   \begin{proof}
   Since a rational representation of a torus is a direct sum of elements in $X^*(T)$ (recall assumption \ref{rational} in \ref{notgroups}), the proof is reduced to the case: dim$T$=dim$D_{\C}=1$.  As a result all we need to do is to prove the sublemma:
    \begin{slem}Let $B_1:=\{\begin{pmatrix}*&*\\&1\end{pmatrix}\}$. Let $f$ be a $B_1(F)\s B_1(\A)$-continuous function which is Eulerian. Then it is a Hecke character (equivalently it is constant in the unipotent part of $B_1(\A)$)\end{slem}
   \begin{proof}[Proof of sublemma] For every $g\in B_1(\A)$ define the function $l_g:F\s \A\rightarrow \mathbb{C}$, by the rule $l_g(x)=f\left(\begin{pmatrix}1&x\\&1\end{pmatrix}g\right)$. Since $f$ is Eulerian, $l_g$ is an aditive character on $F\s \A$ for all $g\in B_1(\A)$. If there is a $g\in B_1(\A)$ for which this character is not the trivial one, then by using the $F$-invariance in the torus of $B_1$ we see that as $g$ varies, $l_g$ can become any nontrivial $F\s \A$-character. However this is impossible because $f$ is continuous and the set of these characters is discrete. \end{proof}
   \subsubsection*{The case $\F_{3,k}:=(3)\circ (k,2^2)$.}
   We choose $\F_{3,k}\in\RPunv{0}$.
   
   Maybe it would be fine for some experts to say that $(3)\circ (k,2^2)$ is identical to $(3)\circ(2^3)$ but as we will study examples that are more and more complicated, saying that one situation is identical to an other will become increasingly confusing. For this reason I will adopt early certain rigorous ways to explain what identical means. I hope that the readers who will find it unnecessarily detailed in the first few examples, will later find it helpful.
         
   Let $\phi $ be any $GL_{k+4} $ automorphic function. Then understanding $(3)\circ(k,2^2) \phi$ is reduced to the  study of $(3)\circ(2^3)$ due to the identity: \begin{equation}\label{trip}(3)\circ(k,2^2)\phi=(3)\circ(2^3)\circ(k-1,1^5)\phi.\end{equation} 
   First of all this identity makes sense because $(3)\circ(2^3)$ can by applied to any $U_6$-automorphic function. What makes it useful is that there is a subgroup $P_0$ of $GL_6$ such that \begin{itemize}
   \item $(k-1,1^5)\phi$ is an automorphic function on $P_0$
   \item all the steps e,eu,c that we did for calculating $(3)\circ (2^3)$ utilized only the invariance in $P_0(F)$ of whatever automorphic function $(3)\circ(2^3)$ was applied to.
  \end{itemize}
   The group $P_0$ can be chosen to be $GL_5U_6$ (where all embeddings are in the upper left). 
    
   As a result in the place of   (\ref{Fe3circ2^3}) we obtain:
       \begin{equation}\label{Fe3circk,2^2}\F_{3,k}=(\text{constant term})\circ(k+2,1^2)+(\text{constant term})\circ(k+1,3)+\sum_{a\in F-{0,a_1}}(k+2,2). \end{equation}
       
       These constant terms are $\CTS$ (in each case $\CTS(\F)$ for $\F$ being the FC with which they are convoluted). Again  by (\ref{trip}), we see that any subgroup $H$ of $GL_5U_6$ that makes the constant terms of (\ref{Fe3circ2^3}) to be $\CTS(H)$, does the same for (\ref{Fe3circk,2^2})

   \end{proof} 
 \subsection*{Example 4: $\F:=(3)\circ (3^3)$.}
  
  $$\begin{array}{|c|c|c|c|c|c|c|c|c|}
  \hline 1&\B&\D&\bb&\dd&\dd&\dd&\dd&\dd
  \h&1&\B&\dd&\bb&\dd&\dd&\dd&\dd
  \h&&1&\dd&\dd&\bb&\dd&\dd&\dd
  \h&&&1&\B&\D&\bb&\dd&\dd
  \h&&&&1&\B&\dd&\bb&\dd
  \h&&&&&1&\dd&\dd&\bb 
  \h&&&&&&1&\B&\D
  \h&&&&&&&1&\B
  \h&&&&&&&&1\h 
  \end{array}\Sim{1}{\eu}
  \begin{array}{|c|c|c|c|c|c|c|c|c|}
   \hline 1&\dd&\dd&\bb&\dd&\dd&\dd&\dd&\dd
   \h&2&\dd&&\bb&\dd&\dd&\dd&\dd
   \h&&3&&&\bb&\dd&\dd&\dd
   \h&&&4&\dd&\dd&\bb&\dd&\dd
   \h&&&&5&\dd&&\bb&\dd
   \h&&&&&6&&&\bb 
   \h&&&&&&7&\bb&\dd
   \h&&&&&&&8&\bb
   \h&&&&&&&&9\h  
   \end{array}$$ $$\Sim{2}{\co}
   \begin{array}{|c|c|c|c|c|c|c|c|c|}
     \hline 1&\dd&\bb&\dd&\dd&\dd&\dd&\dd&\dd
     \h&2&&\dd&\bb&\dd&\dd&\dd&\dd
     \h&&3&&\dd&\bb&\dd&\dd&\dd
     \h&&&4&&\dd&\bb&\dd&\dd
     \h&&&&5&&\dd&\bb&\dd
     \h&&&&&6&&\bb&\dd 
     \h&&&&&&7&&\bb
     \h&&&&&&&8&\bb
     \h&&&&&&&&9\h
     \end{array}\Sim{3}{\eu}
    \begin{array}{|c|c|c|c|c|c|c|c|c|}
        \hline 1&\dd&\bb&\dd&\dd&\dd&\dd&\dd&\dd
        \h&2&&\dd&\bb&\dd&\dd&\dd&\dd
        \h&&3&\dd&\dd&\bb&\dd&\dd&\dd
        \h&&&4&&&\bb&\dd&\dd
        \h&&&&5&&\dd&\bb&\dd
        \h&&&&&6&&\bb&\dd 
        \h&&&&&&7&&\bb
        \h&&&&&&&8&\bb
        \h&&&&&&&&9\h
        \end{array}=\F_0 +\sum_a \F_a 
   $$
   where $$\F_0=\begin{array}{|c|c|c|c|c|c|c|c|c|}
          \hline 1&\dd&\bb&\dd&\dd&\dd&\dd&\dd&\dd
          \h&2&&\dd&\bb&\dd&\dd&\dd&\dd
          \h&&3&\dd&\dd&\bb&\dd&\dd&\dd
          \h&&&4&&&\bb&\dd&\dd
          \h&&&&5&&\dd&\bb&\dd
          \h&&&&&6&\dd&\bb&\dd 
          \h&&&&&&7&&\bb
          \h&&&&&&&8&\bb
          \h&&&&&&&&9\h
          \end{array}$$and $$\F_a=\begin{array}{|c|c|c|c|c|c|c|c|c|}
                   \hline 1&\dd&\bb&\dd&\dd&\dd&\dd&\dd&\dd
                   \h&2&&\dd&\bb&\dd&\dd&\dd&\dd
                   \h&&3&\dd&\dd&\bb&\dd&\dd&\dd
                   \h&&&4&&&\bb&\dd&\dd
                   \h&&&&5&&\dd&\bb&\dd
                   \h&&&&&6&\bb_a&\bb&\dd 
                   \h&&&&&&7&&\bb
                   \h&&&&&&&8&\bb
                   \h&&&&&&&&9\h
                  \end{array}. $$
                   We first study $\F_0$. We have $\F_0=$
   $$\begin{array}{|c|c|c|c|c|c|c|c|c|}
    \hline 1&\dd&\bb&\dd&\dd&\dd&\dd&\dd&\dd
    \h&2&&\dd&\bb&\dd&\dd&\dd&\dd
   \h&&3&\dd&\dd&\bb&\dd&\dd&\dd
    \h&&&4&&&\bb&\dd&\dd
     \h&&&&5&&\dd&\dd&\dd
    \h&&&&&6&\dd&\bb&\dd 
    \h&&&&&&7&&\dd
    \h&&&&&&&8&\bb
    \h&&&&&&&&9\h
  \end{array} 
     \begin{array}{c}1\ri 3\\2\ri 1\\3\ri 6\\4\ri 2\\5\ri 4\\6\ri 7\\7\ri 5\\8\ri 8\\9\ri 9 \end{array}\Sim{1}{}
     \begin{array}{|c|c|c|c|c|c|c|c|c|}
        \hline 1&\dd&&\bb&\dd&&\dd&\dd&\dd
        \h&2&&&\bb&&&\dd&\dd
       \h\dd&\dd&3&\dd&\dd&\bb&\dd&\dd&\dd
        \h&&&4&\dd&&&\dd&\dd
         \h&&&&5&&&&\dd
        \h&\dd&&\dd&\dd&6&\bb&\dd&\dd 
        \h&&&&\dd&&7&\bb&\dd
        \h&&&&&&&8&\bb
        \h&&&&&&&&9\h
     \end{array}\Sim{2}{\eu}   $$
    $$\begin{array}{|c|c|c|c|c|c|c|c|c|}
           \hline 1&\dd&&\bb&\dd&\dd&\dd&\dd&\dd
           \h&2&&&\bb&\dd&\dd&\dd&\dd
          \h&&3& \dd&\dd&\bb&\dd&\dd&\dd
           \h&&&4&\dd&&\dd&\dd&\dd
            \h&&&&5&&\dd&\dd&\dd
           \h&&&&&6&\bb&\dd&\dd 
           \h&&&&&&7&\bb&\dd
           \h&&&&&&&8&\bb
           \h&&&&&&&&9\h
          \end{array}\Sim{3}{\eu}
     \begin{array}{|c|c|c|c|c|c|c|c|c|}
               \hline 1&\D&&\bb&\dd&\dd&\dd&\dd&\dd
               \h&2&&\dd&\bb&\dd&\dd&\dd&\dd
              \h&&3& \dd&\dd&\bb&\dd&\dd&\dd
               \h&&&4&\D&&\dd&\dd&\dd
                \h&&&&5&&\dd&\dd&\dd
               \h&&&&&6&\bb&\dd&\dd 
               \h&&&&&&7&\bb&\dd
               \h&&&&&&&8&\bb
               \h&&&&&&&&9\h
        \end{array}       $$    
       Now we study $\F_a$. For a unique $a_1 $ we get with one appropriate conjugation
       $$\F_{a_1}\Sim{1}{c} 
       \begin{array}{|c|c|c|c|c|c|c|c|c|}
                         \hline 1&\dd&\bb&\dd&\dd&\dd&\dd&\dd&\dd
                         \h&1&&\dd&\bb&\dd&\dd&\dd&\dd
                         \h&&1&\dd&\dd&\bb&\dd&\dd&\dd
                         \h&&&1&&&\bb&\dd&\dd
                         \h&&&&1&&\dd&\bb&\dd
                         \h&&&&&1&\bb&\dd&\dd 
                         \h&&&&&&1&&\dd
                         \h&&&&&&&1&\bb
                         \h&&&&&&&&1\h
                \end{array}\Sim{2}{c}
   \begin{array}{|c|c|c|c|c|c|c|c|c|}
                       \hline 1&\dd&\bb&\dd&\dd&\dd&\dd&\dd&\dd
                      \h&2&&\dd&\bb&\dd&\dd&\dd&\dd
                       \h&&3&\dd&\dd&\bb&\dd&\dd&\dd
                      \h&&&4&&&\dd&\dd&\dd
                       \h&&&&5&&\dd&\bb&\dd
                       \h&&&&&6&\bb&\dd&\dd 
                       \h&&&&&&7&&\dd
                       \h&&&&&&&8&\bb
                      \h&&&&&&&&9\h
                   \end{array}                      $$
                   $$\begin{array}{c}1\ri 1\\2\ri 2\\3\ri 3\\4\ri 9\\5\ri 4\\6\ri 5\\7\ri 6\\8\ri 7\\9\ri 8 \end{array}\Sim{3}{}
  \begin{array}{|c|c|c|c|c|c|c|c|c|}\hline
  1&\z&\x&\z&\z&\z&\z&\z&\z\h
  &2&&\x&\z&\z&\z&\z&\z\h
  &&3&\z&\x&\z&\z&\z&\z\h
  &&&4&&\x&\z&\z&\h
  &&&&5&\z&\x&\z&\h
  &&&&&6&&\x&\h
  &&&&&&7&\z&\h
  &&&&&&&8&\h
  &&&&&\z&\z&\z&9\h
  \end{array} \Sim{4}{\eu}   
 \begin{array}{|c|c|c|c|c|c|c|c|c|}\hline
   1&\z&\x&\z&\z&\z&\z&\z&\z\h
   &2&&\x&\z&\z&\z&\z&\z\h
   &&3&\z&\x&\z&\z&\z&\z\h
   &&&4&&\x&\z&\z&\z\h
   &&&&5&\z&\x&\z&\z\h
   &&&&&6&&\x&\z\h
   &&&&&&7&\z&\h
   &&&&&&&8&\h
   &&&&&&&&9\h
   \end{array}$$ $$\Sim{5}{\eu}  
\begin{array}{|c|c|c|c|c|c|c|c|c|}\hline
   1&\D&\x&\z&\z&\z&\z&\z&\z\h
   &2&\z&\x&\z&\z&\z&\z&\z\h
   &&3&\D&\x&\z&\z&\z&\z\h
   &&&4&\z&\x&\z&\z&\z\h
   &&&&5&\D&\x&\z&\z\h
   &&&&&6&\z&\x&\z\h
   &&&&&&7&\D&\h
   &&&&&&&8&\h
   &&&&&&&&9\h
   \end{array}                 $$
                         For the other $a$:
         $$\F_a\Sim{1}{c} 
  \begin{array}{|c|c|c|c|c|c|c|c|c|}
                          \hline 1&\dd&\bb&\dd&\dd&\dd&\dd&\dd&\dd
                          \h&1&&\dd&\bb&\dd&\dd&\dd&\dd
                          \h&&1&\dd&\dd&\bb&\dd&\dd&\dd
                          \h&&&1&&&\bb&\dd&\dd
                          \h&&&&1&&\dd&\bb&\dd
                          \h&&&&&1&\bb&\dd&\dd 
                          \h&&&&&&1&&\bb
                          \h&&&&&&&1&\bb
                          \h&&&&&&&&1\h
                    \end{array}\Sim{2}{c}
  \begin{array}{|c|c|c|c|c|c|c|c|c|}
                           \hline 1&\dd&\bb&\dd&\dd&\dd&\dd&\dd&\dd
                           \h&1&&\dd&\bb&\dd&\dd&\dd&\dd
                           \h&&1&\dd&\dd&\bb&\dd&\dd&\dd
                           \h&&&1&&&\dd&\dd&\dd
                           \h&&&&1&&\dd&\bb&\dd
                           \h&&&&&1&\bb&\dd&\dd 
                           \h&&&&&&1&&\bb
                           \h&&&&&&&1&\dd
                           \h&&&&&&&&1\h
                   \end{array}         
$$
From all these diagrams we obtain an identity similar to (\ref{Fe3circ2^3}), and then the following corollary.
\begin{cor} Let $\pi$ be a $GL_9$-automorphic representation. Assume that $\Om(\pi) $ is either $(5,2,2)$ or $(4,4,1) $. Then $(3)\circ(3^3)\pi$ is nonzero Eulerian.   \end{cor}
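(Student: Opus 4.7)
The plan is to package the diagram computations just performed into a single identity for $\F := (3)\circ(3^3)$, in the spirit of (\ref{Fe3circ2^3}), and then to evaluate it on $\varphi \in \pi$ so that all but one term drops out.

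First I would collect the endpoints of the $\F_0$ branch and of the $\F_a$ branches for $a \in F^*$. The $\F_0$ branch terminates in a FC of the form $\CTS \circ \mathcal{O}_1$, where $\mathcal{O}_1$ is one of the two minimal orbits $(4,4,1)$, $(5,2,2)$. The $\F_{a_1}$ branch, after its two conjugations, the block-permutation in step 3, and the two root exchanges, produces a diagram whose non-$\CTS$ part is (up to $\sim$) the FC attached to the other minimal orbit $\mathcal{O}_2$. For each $a \in F^* \setminus \{a_1\}$, the final step in that sub-case yields a diagram that is $\sim$ a FC attached to a strictly larger partition $\mathcal{O}_a$. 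Assembling everything gives
$$\F = \CTS \circ \mathcal{O}_1 + \CTS \circ \mathcal{O}_2 + \sum_{a \in F^* \setminus \{a_1\}} \CTS \circ \mathcal{O}_a,$$
with each $\mathcal{O}_a$ strictly dominating both $\mathcal{O}_1$ and $\mathcal{O}_2$.

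Next, I would fix $\pi$ with $\Om(\pi) \in \{(4,4,1), (5,2,2)\}$ and choose $\varphi \in \pi$ so that $\Om(\pi)(\varphi)$ is nonzero Eulerian. Each $\mathcal{O}_a$ strictly dominates $\Om(\pi)$, so $\mathcal{O}_a(\varphi) = 0$ and the sum vanishes. The two partitions $(4,4,1)$ and $(5,2,2)$ are incomparable in the dominance order (partial sums $4,8,9$ versus $5,7,9$), so whichever of $\mathcal{O}_1, \mathcal{O}_2$ does not equal $\Om(\pi)$ also vanishes on $\varphi$. Only the term matching $\Om(\pi)$ survives, and Lemma \ref{CTS}, applied with the minimal-orbit FC in the role of $\F$ there, removes the $\CTS$ factor. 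Thus $\F(\varphi) = \Om(\pi)(\varphi)$, nonzero Eulerian.

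The main obstacle is the verification that each constant-term factor appearing at the end of the $\F_0$, $\F_{a_1}$, and $\F_a$ branches genuinely satisfies the hypotheses of Definition \ref{CTSdef}: namely, that the ambient invariance group $H$ decomposes as $T \rtimes D_{\C}$ with $T$ a torus acting on $D_{\C}$ with an open orbit. This was handled implicitly in the $(3)\circ(2^3)$ discussion, and in each case should be read off from the stabilizer data of the final diagram. Once this is verified, the constant terms contract to the identity on nonzero Eulerian functions, and the dominance-order vanishing above completes the proof.
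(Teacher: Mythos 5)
Your proposal is correct and follows essentially the same route as the paper, which at this point simply says that the diagrams for $\F_0$, $\F_{a_1}$ and the generic $\F_a$ assemble into an identity analogous to (\ref{Fe3circ2^3}) (with minimal orbits $(5,2,2)$ and $(4,4,1)$, the generic-$a$ terms giving a strictly larger orbit), after which the vanishing of non-matching terms and Lemma \ref{CTS} give the conclusion exactly as in the $(3)\circ(2^3)$ case. Your write-up in fact makes explicit two points the paper leaves implicit, namely the incomparability of $(5,2,2)$ and $(4,4,1)$ and the need to check the $\CTS$ hypotheses of Definition \ref{CTSdef} on each branch; the only cosmetic discrepancy is that you attach a $\CTS$ factor to the generic-$a$ terms, which is unnecessary but harmless since those terms vanish on $\varphi$ anyway.
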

\subsection*{Example 5: $\F_4:=(4)\circ(2^4)$ and then $\F_{4,k}:=(4)\circ(k,2^3)$.}
$$ \F_4=\begin{array}{|c|c|c|c|c|c|c|c|}
  \hline 1&\B&\D&\D&\bb&\dd&\dd&\dd
  \h &1&\B&\D&\dd&\bb&\dd&\dd
  \h &&1&\B&\dd&\dd&\bb&\dd
  \h&&&1&\dd&\dd&\dd&\bb  
  \h &&&&1&\B&\D&\D
  \h&&&&&1&\B&\D
  \h&&&&&&1&\B
  \h&&&&&&&1\h
     \end{array}\Sim{1}{\eu}
  \begin{array}{|c|c|c|c|c|c|c|c|}
    \hline 1&\dd&\dd&\dd&\bb&\dd&\dd&\dd
    \h &1&\dd&\dd&&\bb&\dd&\dd
    \h &&1&\dd&&&\bb&\dd
    \h&&&1&&&&\bb  
    \h &&&&1&\bb&\dd&\dd
    \h&&&&&1&\bb&\dd
    \h&&&&&&1&\bb
    \h&&&&&&&1\h
       \end{array}\Sim{2}{\eu}$$ $$       
  \begin{array}{|c|c|c|c|c|c|c|c|}
      \hline 1&\dd&\x&\dd&\z&\dd&\dd&\dd
      \h &1&&\z&\x&\z&\dd&\dd
      \h &&1&&\x&&\z&\dd
      \h&&&1&&\z&\x&\z  
      \h &&&&1&&\x&\dd
      \h&&&&&1&&\x
      \h&&&&&&1&\bb
      \h&&&&&&&1\h
         \end{array}\eq{3}{\e}\F_{4,0}+\sum_{a\in F^*}\F_{4,a},         $$  
         where the Fourier expansion in the third step is along the entry (i=3,j=6).
         We first study $\F_{4,0}$.
         We consider in $\F_{4,0}$ the Fourier expansion along (i=3,j=4), which we denote by $\F_{4,0}=\F_{4,00}+\sum_{a\in F^*}\F_{4,0a}$. We first study $\F_{4,00}$. 
         $$\F_{4,00}\Sim{1}{\co} \begin{array}{|c|c|c|c|c|c|c|c|}
               \hline 1&\dd&\x&\dd&\z&\dd&\dd&\dd
               \h &1&&\z&\z&\z&\dd&\dd
               \h &&1&\z&\x&\z&\z&\dd
               \h&&&1&&\z&\x&\z  
               \h &&&&1&&\x&\dd
               \h&&&&&1&&\x
               \h&&&&&&1&\bb
               \h&&&&&&&1\h
                  \end{array} $$ 
                  
                  From here we can continue exactly as in the case for $\F_3$, because we have: 
                   \begin{equation}\label{F00}\F_{4,00}\sim\F_3^{(2)}\circ \F_{4,00,N_0}, \end{equation}
                   where $N_0$ is the biggest subgroup of $D_{\F_{4,00}}$ that is generated by the  $U_{(i,j)}$ with $i=1,2$. Recall (definition \ref{numbering} in \ref{notdiagrams}) that by $\F_3^{(2)}$ we mean  the FC that we have after the second step for $\F_3$ in example 3. In the calculation for $\F_3^{(2)}$  all steps except the last two for $\F_{3,0}$ happen within the stabilizer of $\F_{4,00,N_0}$. As a result we have the identity
                   \begin{equation}\label{firstc1}\F_{4,00}\in \PP{5,1^3}+\PP{4,3,1}+\sum_{\infty}\PP{5,2,1}.\end{equation}
                   
                   The notation $\PP{*}$ is defined in \ref{Appen0}. The identity (\ref{firstc1}) will be useful due to proposition \ref{smallc} in \ref{smallcsubsection}. Notice that at this step in the previous examples, we where proving special cases of proposition \ref{smallc}. We will establish identities like (\ref{firstc1}) for the other terms of the form $\F_{**}$ and we will apply this proposition after that.
                   
                     We continue with the study of $\F_{4,0a}$. After an appropriate conjugation we have that there is a unique $a_1\in F^*$ such that $\F_{4,0a_1}$ and $\F_{4,0a}$ for $a\in F-\{0,a_1\}$ are given by:                                 
                       $$\F_{4,0a_1}\sim \begin{array}{|c|c|c|c|c|c|c|c|}
                         \hline 1&\dd&\x&\dd&\z&\dd&\dd&\dd
                         \h &1&&\x&\z&\z&\dd&\dd
                         \h &&1&\z&\x&\z&\z&\dd
                          \h&&&1&&\z&\x&\z  
                         \h &&&&1&&\z&\dd
                         \h&&&&&1&&\x
                         \h&&&&&&1&\bb
                          \h&&&&&&&1\h
                          \end{array} \quad            
              \F_{4,0a}\sim \begin{array}{|c|c|c|c|c|c|c|c|}
                                       \hline 1&\dd&\x&\dd&\z&\dd&\dd&\dd
                                       \h &1&&\x&\z&\z&\dd&\dd
                                       \h &&1&\z&\x&\z&\z&\dd
                                        \h&&&1&&\z&\x&\z  
                                       \h &&&&1&&\x&\dd
                                       \h&&&&&1&&\x
                                       \h&&&&&&1&\bb
                                        \h&&&&&&&1\h
                                        \end{array}         $$
                   
       First we study $\F_{4,0a}$ with $a\neq a_1,0$. We have an expression
       $\F_{4,0a}=\F_3^{(2)}\circ\F_{4,0a,N}$ where $N$ in this case is the unipotent radical of the parabolic subgroup of $GL_8$, that has $GL_3\times GL_5$ as Levi (with $GL_3$ being in the upper left). Everything about $(3)\circ(2^3)$ is chosen as in the case of $\F_{4,00}$.  Most importantly the stabilizer of $\F_{4,0a,N}$ contains the steps in the calculation for $(3)\circ(2^3)$  that we used previously to obtain identity (\ref{firstc1}). As a result we have   
       \begin{equation}
       \F_{4,0a}\in \PP{5,2,1}+\PP{4,4}+\sum_{\infty}\PP{5,3}.
       \end{equation}  
       
       We continue with the study of $\F_{4,0a_1}$. Notice that 
       $$\F_{4,0a_1}=\CTS\circ\F_{4,0a_1N_1},$$ where $N_1$ is generated by all the $U_a$ that generate $D_{\F_{4,0a_1}}$ except $U_{(5,7)}$. $U_{(5,7)}$ is the domain of the $\CTS$.  with one conjugation, and then an $\eu$-step we obtain $\F_{4,0a_1N_1}\in \PP{4,3,1}$, and this implies that $\F_{4,0a_1}\in\PP{4,3,1}$. In more detail
       $$\F_{4,0a_1N_1}\Sim{1}{\co}
   \begin{array}{|c|c|c|c|c|c|c|c|}
                                   \hline 1&\dd&\x&\dd&\z&\dd&\dd&\dd
                                   \h &1&&\x&\z&\z&\dd&\dd
                                   \h &&1&\z&\x&\z&\z&\dd
                                    \h&&&1&&\z&\x&\z  
                                   \h &&&&1&&&\dd
                                   \h&&&&&1&&\z
                                   \h&&&&&&1&\bb
                                    \h&&&&&&&1\h
                                    \end{array}\Sim{2}{\eu}    
        \begin{array}{|c|c|c|c|c|c|c|c|}
                                \hline 1&&\x&\dd&\z&\dd&\dd&\dd
                                \h &1&\dd&\x&\z&\z&\dd&\dd
                                \h &&1&&\x&\z&\z&\dd
                                 \h&&&1&\z&\z&\x&\z  
                                \h &&&&1&&&\dd
                                \h&&&&&1&&\z
                                \h&&&&&&1&\bb
                                 \h&&&&&&&1\h
                                 \end{array}\in\PP{4,3,1} $$

   Finally we consider the terms $\F_{4,a}$  with $a\neq 0$. We have
   $$\F_{4,a}\Sim{1}{\co}
   \begin{array}{|c|c|c|c|c|c|c|c|}
   \hline 1&\z&\x&\z&\z&\z&\z&\z
   \h&1&&\z&\x&\z&\z&\z
   \h&&1&\z&\z&\x&\z&\z
   \h&&&1&&&\z&\z
   \h&&&&1&&\x&\z
   \h&&&&&1&&\x
   \h&&&&&&1&\z
   \h&&&&&&&1\h
   \end{array} \Sim{2}{\eu}     
  \begin{array}{|c|c|c|c|c|c|c|c|}
     \hline 1&\z&\x&\z&\z&\z&\z&\z
     \h&1&&\z&\x&\z&\z&\z
     \h&&1&\z&\z&\x&\z&\z
     \h&&&1&&&\z&\z
     \h&&&&1&&\x&\z
     \h&&&&&1&\z&\x
     \h&&&&&&1&
     \h&&&&&&&1\h
     \end{array}     
      $$ 
     Consequently we have $\F_{4,a}\in\PP{4,3,1}$  
    
    By putting together the information that we gathered so far we have:
    \begin{equation}\label{Firstc1}\F_{4}\in \PP{5,1^3}+\sum_{\infty}\PP{4,3,1}+\F_{4,\re},\end{equation}
    where $\F_{4,\text{res}}$ is an infinite sum, with each summand be of the form $\PP{a}$ for $a$ being a partition that is bigger from at least one between $(5,1^3)$ and $(4,3,1)$. We can notice that the partitions $a$ that occur in $\F_{4,\text{res}}$ are bigger from both $(5,1^3)$ and $(4,3,1)$ (for an orbit in $\F_{4,\text{res}}$ being bigger from all the minimal, is something that will not be true in higher dimensional examples that we will see later, and it doesn't matter for the first applications that I have in my mind). 
    
    By using proposition \ref{smallc} in (\ref{Firstc1}) we obtain the corollary:
    \begin{cor}Let $\pi$ be a $GL_8$-automorphic representation. If $\Om(\pi)=(5,1^3)$ then $\F_4(\pi)$ is nonzero Eulerian. If $\Om(\pi)=(4,3,1)$, then $\F_4(\pi)$ is nonzero.  \end{cor}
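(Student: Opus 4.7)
The plan is to start from the expansion (\ref{Firstc1}), apply proposition \ref{smallc} termwise to each summand $\PP{a}$ in order to replace it by a (possibly infinite) sum of FCs in $\RPnv{0}$ attached to the orbit $a$, and then invoke the standard vanishing/nonvanishing properties of FCs in $\RPnv{0}$ relative to $\Om(\pi)$ to read off the result.

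First I would analyze the structure of (\ref{Firstc1}) more carefully. It has three kinds of terms: the single term $\PP{5,1^3}$, the infinite family $\sum_\infty \PP{4,3,1}$, and the residual piece $\F_{4,\re}$ whose summands are each of the form $\PP{a}$ with $a$ strictly greater (in the dominance order on partitions of $8$) than at least one of $(5,1^3)$ and $(4,3,1)$. After observing that in fact every orbit $a$ appearing in $\F_{4,\re}$ is strictly greater than both $(5,1^3)$ and $(4,3,1)$ (as stated in the excerpt for this particular example), proposition \ref{smallc} rewrites the whole right hand side of (\ref{Firstc1}) as a (possibly infinite) sum of FCs belonging to $\RPnv{0}$, each one attached to a definite partition of $8$ which is either $(5,1^3)$, $(4,3,1)$, or a partition strictly larger than both.

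Now I would feed in the representation. Suppose $\Om(\pi)=(5,1^3)$. Then every FC in $\RPnv{0}$ attached to an orbit $a \not\leq (5,1^3)$ vanishes on $\pi$; this is one of the appendix prerequisites in \ref{UOrb prerequisites}. Since $(4,3,1)$ is incomparable with $(5,1^3)$ in the dominance order (the second partial sum of $(4,3,1)$ is $7$, strictly bigger than $6$ for $(5,1^3)$), and every $a$ occurring in $\F_{4,\re}$ is strictly larger than $(5,1^3)$, the only surviving contribution to $\F_4(\pi)$ is the single FC in $\RPnv{0}$ coming from $\PP{5,1^3}$. By the defining property of $\RPnv{0}$, this FC is Eulerian nonzero on $\pi$, and hence so is $\F_4(\pi)$. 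Suppose instead $\Om(\pi)=(4,3,1)$. Then by the same incomparability $\PP{5,1^3}$ vanishes on $\pi$, and the residual piece also vanishes; what remains is the family $\sum_\infty \PP{4,3,1}$, which after applying proposition \ref{smallc} becomes an infinite sum of FCs in $\RPnv{0}$ attached to $(4,3,1)$. Since $\Om(\pi)=(4,3,1)$, at least one such FC is nonzero on $\pi$, yielding $\F_4(\pi)\neq 0$ (but of course we do not get Eulerian in this case, because we cannot in general contract an infinite sum to a single Euler product).

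The main obstacle I expect is justifying the two vanishing claims cleanly: one must be sure that the orbits appearing in $\F_{4,\re}$ are indeed not $\leq \Om(\pi)$ in each case, and one must also verify that within the infinite family $\sum_\infty \PP{4,3,1}$ no pathological cancellation happens when $\Om(\pi)=(4,3,1)$. The first point is a careful bookkeeping on the partitions produced during the diagrammatic derivation of (\ref{Firstc1}). The second point reduces to the observation that the distinct summands are parametrized by genuinely different characters (coming from the $\bb_a$ expansions), so that a nonvanishing $\RPnv{0}$-FC in one of them cannot be killed by the others, since nonvanishing of an FC in $\RPnv{0}$ on a representation with matching attached orbit is an independent statement per summand.
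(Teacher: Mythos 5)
Your proof follows the paper's own route: establish the expansion (\ref{Firstc1}), pass from $\RPn$ to $\RPnv{0}$ via Proposition \ref{smallc}, and then read off the answer from the usual vanishing/nonvanishing dichotomy against $\Om(\pi)$. The one place where your argument is looser than it should be is the non-cancellation step in the $(4,3,1)$ case: the correct justification is not that the summands are parametrized by ``different characters'' and therefore ``independent,'' but that the whole expression (\ref{Firstc1}) is produced only by steps of the form $\e$, $\eu$, $\co$ (Fourier expansions and conjugations), which forces $\F_4(\varphi)=0$ if and only if every summand $\F'$ of the expression satisfies $\F'(\varphi)=0$; this is exactly the content the paper isolates as Proposition \ref{minimalorb}, to which the remark right after the corollary points for this very issue. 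A second minor inaccuracy: Proposition \ref{smallc} does not expand a single element of $\RPn$ into a (possibly infinite) sum of $\RPnv{0}$-FCs; it exhibits each element of $\RPn$ as reachable from a \emph{single} element of $\RPdnv{0}$ by a chain of $\eu$, $\co$ and $\CTS\circ$ operations, each of which preserves nonvanishing (and, via Lemma \ref{CTS}, even preserves the automorphic function itself when it is nonzero Eulerian), which is how you get both the Eulerianity in the $(5,1^3)$ case and the nonvanishing of the individual $(4,3,1)$ summands.
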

    \begin{rem} In case it is not clear to the reader why $\F_4(\pi)$ is nonzero for $\Om(\pi)=(4,3,1)$, part of proposition \ref{minimalorb} in \ref{GCsection} will make it clear. \end{rem}
    \subsubsection*{The case $\F_{4,k}:=(4)\circ(k,2^3)$ for $k>2$}
    For $(k,2^3) $ we choose $\RPdnv{0}$.
    We have
    $$\F_{4,k}=\F_{4}\circ\F_{4,k,N}, $$ where here $N$ is the unipotent subgroup of $D_{\F_{4,k}}$ for which $\F_{4,k,N}$ becomes the FC attached to $(k-1,1^7)$ . Then we can see that all the steps $\e,\eu,\co$ are inside the stabilizer of $\F_{4,k,N}^\prime $. This gives:
    $$\F_{4,k}\in \PP{k+3,1^3}+\PP{4,k+1,1}+\F_{4,k,\text{res}}.$$
    As previously $\F_{4,k,\text{res}}$ is a quantity that doesn't contribute any minimal orbit. A main difference with the $k=2$ case is that for $k>2$ we no longer have an infinite sum for $\PP{4,k+1,1}$. In the $k=2$ case the sources of $\PP{4,3,1}$ were $\F_{4,00}$, $\F_{4,0a_1}$, and $\F_{4,a}$ for $a\neq 0$. In the $k>2 $ case, only $F_{4,00}$ will contribute to $\PP{4,k+1,1}$. The rest will contribute $\PP{k+2,3,1}$ in the place of $\PP{4,3,1}$.
    
    This difference, makes the $k>2$ case more useful for certain applications. The reason is that $\F_{4,k}$ will be Eulerian, not only for the $GL_{k+6}$-automorphic representations $\pi$ with $\Om(\pi)=(k+3,1^3)$,  It will also be Eulerian for the $\pi$ with $\Om(\pi)=(4,k+1,1)$.
   \section{$\F_n=(n)\circ(2^n)$,  $\F_{n,k}=(n)\circ(k,2^{n-1})$, and certain general conjectures.}
   \subsection{General Fourier coefficients.}
   \label{GCsection}
   Let $\F$ be a $GL_n$  FC, and $\pi$ be a $GL_n$ automorphic representation. A question is in what ways we can calculate $\F(\pi)$. I will restrict to global techniques. I mention two ways to proceed:
      \begin{enumerate}  
   \item Directly utilize in an unfolding of the integral $\F(\pi)$ the induction and residues data that define $\pi$. In more detail, assume we want to calculate $\F$ when applied to a residue of an Eisenstein series $E$. Then we can unfold $\F(E)$ by using the Eisenstein series expansion, and then try to tell if the end-product of the unfolding has a residue.
   \item Before working with a particular automorphic representation, establish a general formula expressing $\F$ (applied to any $GL_n$-automorphic function ) in terms of FCs that have already been understood to some extend by the first way. The main operation in sight for processing $\F$ towards obtaining such a formula is Fourier expansions (in other words steps of the form e and eu). Of course steps of the form $\co$ are also allowed, but in contrast to $\e$ and $\eu$ we do them only for convenience. We also permit steps of the form $\CTS\circ$, and may add other things to the list in a latter version of this article. Finally we may consider automorphic functions in parabolic subgroups of $GL_n$ because we can construct in this way inductive arguments.

   \end{enumerate}

  A question that arises is what set $\FFC$ would be a good choice for:\begin{itemize}                 
   \item Use the first way to understand $\FFC$
   \item Use the second way to  express general families of FCs in terms of elements in $\FFC$. The set of such expressions  for a given FC $\F$, will be denoted by $\F_{w,\FFC}$. The choice of $w$ in this notation stands for weak. The weakness is that in certain cases information is getting lost by applying $\CTS\circ$.     
   \end{itemize}     
  A related question is what set $\FF$ would be a good choice for:
  \begin{itemize}
  \item Use the first way to understand $\FF$
  \item Use the second way to  express general families of FCs in terms of elements in $\FF$, by using only steps of the form $\e$ and $\eu$. The set of such expressions  for a given FC $\F$, will be denoted by $\F_\FF$  
  \end{itemize} 
  In the examples so far and in the rest of this article,  $\FF$ is chosen to be $\RPn$. For an expression in $\F_{\RPn}$, by $\F_{\re}$ we mean the part of the expression corresponding to orbits that are not minimal. Now it is an appropriate moment to formulate the first general conjecture.
\begin{GC}Let $\F$ be a FC in a unipotent subgroup of $GL_n$. Then $\F_{\RPn}$ is not empty.\end{GC}
  
  In the appendix in lemma \ref{conjugation}, we prove that up to conjugation $\RP$ and $\RPn$ are the same. Proposition \ref{smallc} of the appendix, shows that  $\RPn$ is the same with $\RPnv{0}$ up to  steps of the form $\eu$, $\co$ and $\CTS\circ$. Also the three sets $\RPnv{0}$, $\RPunv{0}$ and $\RPdnv{0}$ are all the same up to steps $\eu$ and $\co$ (this is a very special case of proposition \ref{smallc}). From these thoughts we obtain the corollary: 
  \begin{corc1} Let $X$ be any among $\RPnv{0}$, $\quad\RPunv{0}$ and $\quad\RPdnv{0}$. Let $\J\in\RPn$. then there is an expression in $\J_{w,X}$ that consists of only one term. If we fix one such choice for every element in $\RPn$, then for every FC $\F$ of $GL_n$ we obtain an injection  $\F_{\RPn}\ri \F_{w,X}$. \end{corc1}
  
  In all the examples so far (and in the rest of this paper), we did calculations of the form $\eu$, $\e$ and $\co$, expressed in diagrams to obtain an element in $\F_{\RPn}$ and then by proceding as in the previous corollary we obtained information about applying $\F$ to specific automorpphic representations.
  
  There are some useful uniqueness facts that follow directly from the tools introduced so far. 
 \begin{corc2}Assume a (potentially infinite) sum $S$ of FCs belonging to $\RPn$ is zero. Fix one of the minimal unipotent orbits $a$, among the $\Phi(\F)$ for $\F$ being any of the summands of $S$. Consider the subsum $S_a$ of $S$, that consist of the summands $\F$ with $\Phi(\F)=a$. Then $S_a(\pi)=0$ for all $GL_n$-automorphic representations $\pi$ with $\Om(\pi)=a$.  \end{corc2}
   
   \begin{prop}\label{minimalorb}Let $\F$ be a FC of a unipotent subgroup of $GL_n$. For every expression in $\F_{\RPn}$ consider all the minimal unipotent orbits that appear in it. Then the minimal among these orbits, do not depend on which expression of $\F_{\RPn}$ we chose. Also for any among the automorphic representations $\pi$ with $\Om(\pi)$ being among the previous minimal orbits we have $\F(\pi)\neq 0$.  \end{prop}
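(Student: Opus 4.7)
The plan is to derive both conclusions from a single reduction formula combined with Corollary 2 of Proposition \ref{smallc}. Fix an expression $E = \sum_j \J_j$ in $\F_{\RPn}$ and let $a$ be a minimal orbit among $\{\Phi(\J_j)\}$. For any automorphic representation $\pi$ with $\Om(\pi) = a$, I would first show that $\F(\pi) = E_a(\pi)$, where $E_a := \sum_{\Phi(\J_j) = a} \J_j$. This follows because summands with $\Phi(\J_j) \not\leq a$ vanish on $\pi$ (by the maximality characterization of $\Om(\pi)$ as the set of maximal orbits supporting a nonzero attached FC), while summands with $\Phi(\J_j) < a$ are excluded by the minimality of $a$ in $E$.

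The next and main step is the nonvanishing $E_a(\pi) \neq 0$. For each individual $\J_j$ appearing in $E_a$, Corollary 1 of Proposition \ref{smallc} lets me replace $\J_j$ by an element of $\RPnv{0}$ attached to the same orbit $a$; by the definition of $\RPnv{0}$ (see \ref{UOrb prerequisites}), this evaluates on $\pi$ to a nonzero Eulerian $GL_n(\A)$-function. The difficulty, and the principal obstacle of the proof, is that $E_a$ may be an infinite sum (as with the $\sum_{a\in F^*}$ families arising in Examples 3 and 5), so one must rule out global cancellation. My plan is to absorb each $\J_j$ in $E_a$, modulo higher-orbit error terms that vanish on $\pi$ since $\Om(\pi) = a$, into a single canonical FC in $\RPnv{0}$ attached to $a$, using the same packaging mechanism that underlies Corollary 2; this reduces the sum to one $\RPnv{0}$-coefficient applied to $\pi$, whose nonvanishing is built into the definition of $\RPnv{0}$.

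With the reduction and nonvanishing steps in hand, I would then establish the independence of $M^*(E) := \{\text{minimal elements of } \{\Phi(\J_j)\}\}$ across expressions by a comparison argument. Take two expressions $E, E'$ of $\F$ and suppose $a \in M^*(E) \setminus M^*(E')$. Pick a minimal orbit $c \leq a$ appearing in the (zero) sum $E - E'$. If $c = a$, then $a$ is minimal in $E - E'$ and Corollary 2 forces $E_a(\pi) - E'_a(\pi) = 0$ for $\Om(\pi) = a$; since nothing $< a$ appears in $E'$ either, either $a \in E'$ (making $a$ minimal in $E'$, contradicting $a \notin M^*(E')$) or $a \notin E'$ (so $E'_a = 0$, which forces $E_a(\pi) = 0$ and contradicts the nonvanishing step). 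If $c < a$, then by minimality of $a$ in $E$ the orbit $c$ is absent from $E$, hence $c$ is minimal in $E'$; the nonvanishing step applied to $E'$ at $c$ gives $\F(\pi') \neq 0$ for $\Om(\pi') = c$, whereas the reduction argument for $E$ yields $\F(\pi') = E(\pi') = 0$ because no summand of $E$ has orbit $\leq c < a$, again a contradiction. Swapping the roles of $E$ and $E'$ gives $M^*(E) = M^*(E')$, and combined with the nonvanishing step this completes the proof.
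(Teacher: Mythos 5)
Your argument runs into a genuine gap at the nonvanishing step, and the underlying strategy diverges from the paper's in a way that makes the gap hard to close. You formulate the problem as: show $E_a(\pi)\neq 0$ where $E_a=\sum_{\Phi(\J_j)=a}\J_j$ is a possibly infinite sum of FCs each of which is individually nonzero Eulerian on $\pi$. You correctly observe that one must rule out cancellation across this infinite family, but your proposed remedy --- ``absorb each $\J_j$ in $E_a$ ... into a single canonical FC in $\RPnv{0}$ attached to $a$, using the same packaging mechanism that underlies Corollary 2'' --- does not correspond to anything concrete in the paper. Corollary 2 of Proposition \ref{smallc} is a statement about what must happen when a sum is already known to be zero; it provides no device for combining infinitely many orbit-$a$ coefficients into one, and there is no reason to expect such a device to exist. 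As written, the nonvanishing step is unsupported, and since your independence argument repeatedly invokes ``the nonvanishing step,'' the gap propagates through the whole proof.

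The paper sidesteps cancellation entirely by exploiting that a member of $\F_{\RPn}$ is, by construction, produced only by steps of type $\e$, $\eu$ and $\co$. For a Fourier expansion the pointwise identity $f=\sum_\psi f_\psi$ is secondary; what matters is the equivalence $f\equiv 0\iff f_\psi\equiv 0$ for all $\psi$, and $\eu$ and $\co$ preserve vanishing in both directions (adelic integration and conjugation). Iterating, one gets $\F(\pi)=0\iff\F'(\pi)=0$ for every terminal $\F'$ in the expression. Combined with $\F'(\pi)\neq 0\iff\Phi(\F')=\Om(\pi)$ (from Lemma \ref{CTS} and Proposition \ref{smallc}), nonvanishing of $\F$ on any $\pi$ with $\Om(\pi)$ a minimal orbit of the expression follows from the existence of a single summand attached to that orbit, with no need to evaluate or sum anything. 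The expression-independence of the set of minimal orbits then drops out by the same two facts applied to a second expression. You should replace your absorption idea with this structural equivalence; once it is in place, your comparison argument for independence (which is essentially sound modulo the gap) can be simplified considerably, since you no longer need to form $E-E'$ and chase minimal orbits of the difference --- a single $\pi$ with $\Om(\pi)=a$ already forces a summand with $\Phi=a$ to appear in \emph{every} expression, and minimality transfers because a $b<a$ in the other expression would feed back a $b<a$ into the first.
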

   \begin{proof}Consider one of the $\pi$ in the proposition. Since the expression for $\F$ that we consider is created only by steps of the form $\eu, \e$ and $\co$, we have
   $$\F(\pi)=0\iff\F^\prime(\pi)=0\text{ for all }\F^\prime\text{ occuring in the expression}.  $$ 
   From lemma \ref{CTS} and proposition \ref{smallc} we obtain: $$\F^\prime(\pi)\neq 0\iff\Phi(\F^\prime)=\Om(\pi). $$ \end{proof}
   Now we proceed with the conjecture that we will be using. 
   \begin{GC}Let $\F$ be a generic FC of a unipotent subgroup of $GL_n$. Consider an expression $*$ of $\F_{\RPn}$. Let $a$ be any of the minimal orbits of $\F_{\RPn}$. Then $\Di{a}\geq\Di{D_\F}$. Also $a$ occurs in only finitely many terms of $*$  if and only if $\Di{D_\F}=\Di{a} $.  \end{GC}
   \begin{rem}I am not aware of a definition in general for the concept``generic FC". In this article we call a FC $\F$ generic, if it is possible to find a subgroup $H$ of $GL_n$ for which:
   \begin{itemize} \item $H$ acts by conjugation on $D_\F$. This implies that it also acts on the set of characters on $D_\F(F)\s D_\F(\A)$
   \item This action of $H$ has an open orbit, and $\F$ is in it.
   \end{itemize}  
   
   For the convoluted FCs that are treated in this paper these two conditions are satisfied, by choosing $H$ to be a torus.  
     \end{rem}
             
  \subsection{$\F_n=(n)\circ(2^n)$ and $\F_{n,k}=(n)\circ(k,2^{n-1}).$}  \label{bign}
  We start with the case for $\F_5$. Then we will do all the other cases by explaining what are the few differences with $(5)\circ(2^5)$.
  $$\begin{array}{|c|c|c|c|c|c|c|c|c|c|}
  \hline 1&\B&\D&\D&\D&\x&\z&\z&\z&\z
  \h&2&\B&\D&\D&\z&\x&\z&\z&\z
  \h&&3&\B&\D&\z&\z&\x&\z&\z
  \h&&&4&\B&\z&\z&\z&\x&\z
  \h&&&&5&\z&\z&\z&\z&\x
  \h&&&&&6&\B&\D&\D&\D
  \h&&&&&&7&\B&\D&\D
  \h&&&&&&&8&\B&\D
  \h&&&&&&&&9&\B
  \h&&&&&&&&&\text{{\tiny 10}}\h
   \end{array}\Sim{1}{\eu}
 \begin{array}{|c|c|c|c|c|c|c|c|c|c|}
 \hline 1&\z&\z&\z&\z&\x&\z&\z&\z&\z
 \h&2&\z&\z&\z&&\x&\z&\z&\z
 \h&&3&\z&\z&&&\x&\z&\z
 \h&&&4&\z&&&&\x&\z
 \h&&&&5&&&&&\x
 \h&&&&&6&\x&\z&\z&\z
 \h&&&&&&7&\x&\z&\z
 \h&&&&&&&8&\x&\z
 \h&&&&&&&&9&\x
 \h&&&&&&&&&\text{{\tiny 10}}\h
 \end{array}  $$
 $$\co=\begin{Bmatrix}1\ri 1\\2\ri 2\\3\ri 4\\4\ri 6\\5\ri 8\\6\ri 3\\7\ri 5\\8\ri 7\\9\ri 9\\10\ri 10
   \end{Bmatrix}\Sim{2}{}
 \begin{array}{|c|c|c|c|c|c|c|c|c|c|}
 \hline 1&\z&\x&\z&\z&\z&\z&\z&\z&\z
 \h&2&&\z&\x&\z&\z&\z&\z&\z
 \h&&3&&\x&&\z&&\z&\z
 \h&&&4&&\z&\x&\z&\z&\z
 \h&&&&5&&\x&&\z&\z
 \h&&&&&6&&\z&\x&\z
 \h&&&&&&7&&\x&\z
 \h&&&&&&&8&&\x
 \h&&&&&&&&9&\x
 \h&&&&&&&&&\text{{\tiny 10}}\h 
 \end{array}\eq{3}{e}\Z_5+\F_{5,*}, $$ where
 $$\Z_5=\begin{array}{|c|c|c|c|c|c|c|c|c|c|}
   \hline 1&\z&\x&\z&\z&\z&\z&\z&\z&\z
   \h&2&&\z&\x&\z&\z&\z&\z&\z
   \h&&3&&\x&\z&\z&\z&\z&\z
   \h&&&4&&\z&\x&\z&\z&\z
   \h&&&&5&&\x&\z&\z&\z
   \h&&&&&6&&\z&\x&\z
   \h&&&&&&7&&\x&\z
   \h&&&&&&&8&&\x
   \h&&&&&&&&9&\x
   \h&&&&&&&&&\text{{\tiny 10}}\h 
   \end{array}\quad,  $$
   and $\F_{5,*}$ is the sum of the other terms in the three Fourier expansions of step 3. When we treat in general the case $\F_n$, we show that due to C2, $\F_{n,*}$ cannot contribute an orbit of dimension $\Di{D_{\F_n}}$.
 
 We now adopt a new notation in which the only information that is retained in the diagrams belongs to the three $2\times 2$ blocs corresponding to $r=0,2,4$ with entries: $$i\in\{4+r,5+r\} \quad j\in\{2+r,3+r\}. $$ With this notation we have
 $$\Z_5=
   \begin{array}{ccc}\ma{\z}{\x}{}{\x}&\ma{\z}{\x}{}{\x}&\ma{\z}{\x}{}{\x} \end{array}.$$
   We continue with our usual operations.
   $$\Z_5\eq{1}{\e}\Z_{5,0}+\sum_{a\in F^*}\Z_{5,a},
    $$where 
    $$\Z_{5,0}:=\begin{array}{ccc}\ma{\z}{\x}{\z}{\x}&\ma{\z}{\x}{}{\x}&\ma{\z}{\x}{}{\x} \end{array}\text{ and }\Z_{5,a}:=\begin{array}{ccc}\ma{\z}{\x}{\x_a}{\x}&\ma{\z}{\x}{}{\x}&\ma{\z}{\x}{}{\x} \end{array}.  $$
    We study first $\Z_{5,0}$. We have
    $$\Z_{5,0}\Sim{1}{\co}\begin{array}{ccc}\ma{\z}{\z}{\z}{\x}&\ma{\z}{\x}{}{\x}&\ma{\z}{\x}{}{\x} \end{array}. $$ This means that \begin{equation}\label{Z5}\Z_{5,0}=\Z_4\circ\J_5 \end{equation} where \begin{itemize} \item$\J_5:=\Z_{5,0,N}$, for $N$ being the unipotent subgroup of $D_{\Z_{5,0}}$ that is generated by the $U_{(i,j)}$ with $i=1,2$,
    \item $\Z_4$ is the analogue of $\Z_5$ for the study of $\F_4$. When we studied $\F_4$ it was denoted by $\F_{4,0}$.
    \end{itemize}. To continue our study we will start using the last nonstandard notation that occurs in this article. This notation is introduced in definitions \ref{U/UU} and \ref{PQQ}, in \ref{Appen0}.  Finally we consider for $i=4,5$ the parabolic subgroup $Q_i$ of $GL_{2i}$, that has Levi the group $GL_1^2\times GL_2^{i-2}$ (where the two $GL_1$ are embedded in the corners). By starting to apply  $\circ\J_5$ to the FCs that occurred in $\Z_{4,\RPn}$, the minimal ones that we get are \begin{equation}\label{PQ}\PP{5,1^3}^{Q_4}\circ\J_5=\PP{6,1^4}^{Q_5}\quad\text{and}\quad\PP{3,4,1}^{Q_4}\circ\J_5=\PP{4,1,4,1}^{Q_5}.  \end{equation} 
    
    As an example of a reason for using the concept $\PP{*}^*$ (in place of the less sophisticated concept $\PP{*}$) notice that $\PP{4,3,1}^{Q_4}\circ\J_5$ equals $\PP{5,1^2,3}^{Q_5}$. Hence from (\ref{PQ}) we obtain that $\PP{4,3,1}^{Q_4}$ and $\PP{3,4,1}^{Q_4}$ give a different unordered orbit after applying $\J_5$. This is not the first example we presented that is serving this purpose. The first one occurred in the second paragraph in the example $(4)\circ(k,2^3)$ for $k>2$.
    
    We continue with $\Z_{5a}$ for $a{\in F^*}$. After conjugating with an appropriate element we get that there is a unique $a_1\in F^*$ such that:
    $$\Z_{5,a_1}\Sim{1}{c}\begin{array}{ccc}\ma{\x}{\z}{\z}{\x}&\ma{\z}{\x}{}{\z}&\ma{\z}{\x}{}{\x} \end{array},$$ and for $a\in F-\{0,a_1\}$ $$\Z_{5,a}\Sim{1}{c}\begin{array}{ccc}\ma{\x}{\z}{\z}{\x}&\ma{\z}{\x}{}{\x}&\ma{\z}{\x}{}{\x} \end{array}.$$
    By \gC 2 we know that in $\Z_{5,a,\RPn}$ for $a\in F-{0,a_1}$ we will not obtain any orbit  that contributes a finite sum of Euler products in $\Z_{5\RPn}$. We can easily avoid \gC 2 here,  because an argument similar to the study of $\Z_{5,0}$ gives that the only minimal term in $\Z_{5,a,\RPn}$ for $a\in F-{0,a_1}$ is the $\PP{5,2,3}^{Q_5}$. However I mention the argument with \gC 2 so that the case $(n)\circ(2^n)$ for $n>5$ will have no differences from $(5)\circ(2^5)$. 
    
    Notice that $\PP{5,2,3}^{Q_5}$ is not even a minimal orbit. However it happens very frequently to obtain minimal orbits, in situations where by using \gC 2 we conclude that there are no orbits with dimension equal to $\Di{D_\F}$. The first such case that we encountered was with $\F=(4)\circ (2^4)$. In this example if we had used \gC 2 we would have concluded that $\F_a$ for $a\in F^*$ cannot contribute a orbit with dimension $\Di{D_F}$ (we demonstrated this without \gC 2). However it turned out that $\F_a$ contributed the minimal orbit $(4,3,1)$.
    
    Now we are left with the study of $\Z_{5, a_1}$. We can write \begin{equation}\label{z5a1}\Z_{5, a_1}=\CTS\circ\Z_3\circ\I_5\end{equation}
    where $\I_5:=\Z_{5, a_1,N}$ for $N$ being here the unipotent subgroup of $D_{\Z_{5,a_1}} $ generated by the $U_{(i,j)}$ with $i\leq 3$ and $i=5$, excluding only $U_{(5,7)}$ which is the domain of $\CTS$. We can now see that the expression that we have obtained in $(\Z_3)_{\RPn}$ leads to an expression in $(\Z_3\circ\I_5)_{\RN{U_6,\nless}}$. By an $\eu$ step  we can convert the expression in $(\Z_3\circ\I_5)_{\RN{U_6,\nless}}$ into an expression in $(\Z_3\circ\I_5)_{\RPn}$, and now by using proposition \ref{smallc}, (\ref{z5a1}) and lemma \ref{CTS},   we obtain that the minimal summands in $\Z_{5,a_1,\RPn}$ are $\PP{3,5,1}^{Q_5}$ and $\PP{3,4,3}^{Q_5}$.
    
     By putting together the minimal orbits that we obtained  for $\Z_{5,0}$, $\Z_{5,a}$ and $\Z_{5,a_1}$ we obtain that the only candidates of orbits that occur finitely many times in $\F_{\RPn}$ are $$(6,1^3)\quad(5,3,1) \quad(4^2,1^2)\quad(4,3^2).$$ We can check that among these only $(6,1^3)$ has dimension equal to $\Di{D_\F}$. From this we conclude that 
     \begin{prop}[up to \gC 2] The only orbit that occurs finitely many times on $(5)\circ(2^5)_{\RPn}$ is $(6,1^3)$. This orbit occurs one time. \end{prop}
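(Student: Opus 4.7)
The plan is to combine the case analyses carried out above for $\Z_{5,0}$, $\Z_{5,a_1}$, and $\Z_{5,a}$ (for $a\in F-\{0,a_1\}$) with Conjecture~C2. The enumeration already done produces the four candidate minimal orbits $(6,1^3)$, $(5,3,1)$, $(4^2,1^2)$, $(4,3^2)$, so the first step is to dispose of the residual sum $\F_{5,*}$ coming from the three Fourier expansions in step~3. For this piece I would invoke C2 in the form promised for the general $\F_n$: each of those expansions is along a character that strictly enlarges the Fourier domain, so no summand of $\F_{5,*}$ has domain dimension equal to $\Di{D_{\F_5}}$, and hence the ``only if'' part of C2 forbids any summand from contributing an orbit of that dimension.

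Next I would verify the dimension comparison that the author indicates is a direct check. Compute $\Di{D_{\F_5}}$ by counting integration slots in the original $10\times 10$ diagram, and compute $\Di{a}$ for each of the four candidate orbits via the standard formula for a Fourier coefficient attached to a partition. Only $(6,1^3)$ matches $\Di{D_{\F_5}}$; the other three orbits give strictly larger domain dimension. Applying C2 then rules out $(5,3,1)$, $(4^2,1^2)$, and $(4,3^2)$ as finitely-occurring orbits, while the converse direction confirms that $(6,1^3)$ does occur in only finitely many summands.

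For the multiplicity claim, I would trace back through the decomposition. The orbit $(6,1^3)$ arises only via the identity $\PP{5,1^3}^{Q_4}\circ\J_5 = \PP{6,1^4}^{Q_5}$ from equation~(\ref{Z5}) in the study of $\Z_{5,0}$. By the analogous statement for $\F_4$ in Example~5, recorded in~(\ref{Firstc1}), the orbit $(5,1^3)$ appears exactly once in $\F_{4,\RPn}$; convolution with $\J_5$ preserves that single occurrence. Neither $\Z_{5,a_1}$ nor $\Z_{5,a}$ for $a\neq 0,a_1$ contributes $(6,1^3)$, since by the preceding computations their minimal orbits are strictly larger in the partition order. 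The main obstacle is the dimension comparison, since an accidental numerical coincidence between $\Di{D_{\F_5}}$ and $\Di{a}$ for one of the three non-desired candidates would invalidate the conclusion; a secondary subtlety is that C2 must be applied at the level of the entire residual sum $\F_{5,*}$ rather than to individual terms of a single Fourier expansion.
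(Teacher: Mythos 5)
Your route is the paper's own: the same decomposition $\F_5=\Z_5+\F_{5,*}$ and $\Z_5=\Z_{5,0}+\sum_a\Z_{5,a}$, the same four candidate orbits, the same \gC 2 dimension test against $\Di{D_{\F_5}}$, and the same trace-back of $(6,1^3)$ to the single $\PP{5,1^3}$ term coming from the $\F_4$ computation. The one genuine divergence, and the one real gap, is your disposal of $\F_{5,*}$. \gC 2 is stated only for \emph{generic} FCs, and you apply it to each nonconstant summand of $\F_{5,*}$ separately (comparing its enlarged domain with $\Di{D_{\F_5}}$) without showing that those summands are generic in the sense of the remark following \gC 2; this is not automatic for terms produced partway through the three Fourier expansions of step 3. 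The paper avoids needing this: in the lemma treating $\F_{n,*}$ (proved up to \gC 2) it fixes a particular order of the expansions and shows, by conjugating with an explicit element $u_a\in U_{(r+2,l)}(F)$ followed by a torus element, that the nonconstant terms indexed by $a\in F^*$ of each expansion are mutually conjugate; hence every orbit contributed by $\F_{5,*}$ occurs infinitely often, and \gC 2 is then invoked only for $\F_5$ itself, which is generic (torus action with open orbit). To make your version stand you must either prove genericity of each summand or substitute the conjugacy argument. Also, the clause of \gC 2 you need at that point is the inequality $\Di{a}\geq\Di{D_\F}$, not the ``only if'' half.

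Two smaller corrections. The minimal orbits of $\Z_{5,a_1}$ and of $\Z_{5,a}$ for $a\neq 0,a_1$ are not ``strictly larger in the partition order'' than $(6,1^3)$: the orbits $(5,3,1)$, $(4,3^2)$ and $(5,3,2)$ are incomparable with it. The conclusion you want still holds, for the correct reason: every orbit occurring in those pieces lies above one of their minimal orbits, and $(6,1^3)$ lies above none of them, so it cannot occur there; likewise it cannot come from $\F_{5,*}$ once the lemma above is in place. As for your worry about an accidental numerical coincidence in the dimension check: reading the candidates as partitions of $10$ and taking $\Di{\cdot}$ of an orbit to be the dimension of the attached Fourier-coefficient domain (half the orbit dimension), the four candidates give $35,37,36,36$, while $\Di{D_{\F_5}}=25+10=35$, so the check the paper leaves to the reader does single out $(6,1^3)$. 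Finally, in the multiplicity-one step note that the convolution identity (\ref{Z5}) involves $\Z_4=\F_{4,0}$ rather than all of $\F_4$; since the unique $\PP{5,1^3}$ term of (\ref{Firstc1}) sits inside $\F_{4,00}$, the count is unaffected, and your conclusion that $(6,1^3)$ occurs exactly once agrees with the paper.
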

     
     This proposition tells us that with the thoughts in this paper the only $GL_{10}$-automorphic representations $\pi$ that we can find with $(5)\circ(2^5)\pi$ being Eulerian are the $\pi$ with $\Om(\pi)=(6,1^3)$. 
     
     Recall that in the case with $(4)\circ(2^4) $ we also obtained few $\pi$ for which we could prove that $(4)\circ(2^4)\pi$ is Eulerian, but when we switched to the (identical in terms of $\eu,\e,\co$ steps) case $(4)\circ (k,2^3)$ for $k>2$, we obtained more such $\pi$. We will see that for any $n>4$ the same phenomenon is happening!
     \subsubsection{$\F_n=(n)\circ (2^n)$.}
     The computation of $\F_5$ started with an $\eu$ step and then a $\co$ step. Similarly for $\F_n$  we start with a step $\eu(Y_n,X_n)$ and then with a step $\co(w)$, where
     \begin{itemize}
     \item $Y_n$ consists of all the upper triangular unipotent matrices of the $GL_5$ copy that is embedded in the upper left of $GL_{10}$
     \item $X_n$  is generated by the $U_{(i,j)}$ with $j>i$, $i\leq n$, $j\geq n+1$ and $j-i< n$
     \item $w$ is the permutation: $1\ri 1$, for $2\leq k\leq n$ we have $k\ri 2k-2$, for $n+1\leq k\leq 2n-1$ we have $k\ri 2k-2n+1$, and finally $2n\ri 2n$.
     \end{itemize}
    The third step for the case $n=5$ consisted of three Fourier expansions. Similarly  here we will do in total $\frac{(n-2)(n-3)}{2} $ 1-dimensional Fourier expansions. After each Fourier expansion we apply to its constant term the next Fourier expansion (and we leave the nonconstant terms of the first as they are). Let $D_{\F_n^{(2)}}$ be the domain of integration of the form that $\F$ has after step $(2)$. Denote by $\cl{U_{2n}}{D_{\F_n^{(2)}}}$ the normal closure of $D_{\F_n^{(2)}}$ inside $U_{2n}$. Then the Fourier expansions that we do are the ones over the root subgroups that are inside $\cl{U_{2n}}{D_{\F_n^{(2)}}}$ but not inside $D_{\F_n^{(2)}}$. 
    
    Notice that we cannot do these Fourier  expansions in any order. For example we can only start with $U_{(3,2n-2)}$, because $\F_n$ after step 2, is not automorphic in any of the other $U_{a}$. There are many ways to proceed. One for example is to do a Fourier expansion at each step on one of the $U_{(i,j)}$ that has maximal the quantity $j-i$ among the $U_{(i,j)}$ that are left.   
    
    After doing these Fourier expansions we have
    $$\F_n\eq{3}{e}\Z_n+\F_{n,*}, $$ where: 
    \begin{itemize}
    \item $\Z_n$ is the FC with $D_{\Z_n}= \cl{U_{2n}}{D_{\F_n^{(2)}}}$, and such that $\psi_{\Z_n}$, and $\psi_{\F_n^{(2)}}$ are nontrivial at exactly the same root subgroups.
    \item $\F_{n,*}$ is the sum of  all the terms that occurred as nonconstant terms in any of the $\frac{(n-2)(n-3)}{2}$ Fourier expansions that we did. 
    \end{itemize}
   The next lemma shows that $\F_{n,*}$ can be ignored for our purpose.
   \begin{lem}[up to C2] In $(\F_{n,*})_{\RPn}$ all the orbits that occur have dimension bigger from $\Di{D_{\F_n}}$.\end{lem}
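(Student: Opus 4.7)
The plan is to invoke conjecture \gC 2 on each summand $\F'$ of $\F_{n,*}$. If each $\F'$ is generic in the sense of the Remark following \gC 2, and if $\Di{D_{\F'}}>\Di{D_{\F_n}}$, then \gC 2 says that every minimal orbit in $(\F')_{\RPn}$ has dimension at least $\Di{D_{\F'}}>\Di{D_{\F_n}}$. Every non-minimal orbit in $(\F')_{\RPn}$ dominates a minimal one (in the dominance order on partitions), and for $GL_n$-unipotent orbits dominance implies at-least in dimension, so every orbit occurring in $(\F')_{\RPn}$ has $\Di>\Di{D_{\F_n}}$, which is what the lemma asserts.

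For the dimension estimate I would first check that steps (1) and (2) in passing from $\F_n$ to $\F_n^{(2)}$ preserve $\Di{D}$. Step (1) is $\eu(Y_n,X_n)$ with $\Di{Y_n}=\binom{n}{2}=\Di{X_n}$ by a direct count from the description of $X_n$, so the root exchange is dimension-preserving. Step (2) is conjugation by the Weyl element $w$, which only permutes roots. Thus $\Di{D_{\F_n^{(2)}}}=\Di{D_{\F_n}}$. Each of the $\frac{(n-2)(n-3)}{2}$ subsequent one-dimensional Fourier expansions enlarges the domain by one root, and a summand of $\F_{n,*}$ is by definition obtained at some step $k\geq 1$ by selecting a nontrivial character and then terminating, so its domain has dimension $\Di{D_{\F_n^{(2)}}}+k\geq \Di{D_{\F_n}}+1$.

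For genericity, each summand $\F'$ has a character $\psi_{\F'}$ that extends $\psi_{\F_n^{(2)}}$ (modulo the trivial contributions of the constant steps) by a single nonzero character on the root subgroup $U_{(i,j)}$ added at step $k$, parametrized freely by $a\in F^*$. The adjoint action of the diagonal torus $T$ of $GL_{2n}$ on each $U_{(i,j)}$ is by the character $t_it_j^{-1}$, so the stabilizer in $T$ of the other components of $\psi_{\F'}$ acts on $F^*$-characters of the new $U_{(i,j)}$ through a nontrivial character; consequently this stabilizer has $F^*$ as an open orbit on characters of $U_{(i,j)}$, and any $\bb_a$ lies in it, establishing genericity. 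The hard part will be this linear-independence argument: one must verify uniformly across $k$ and across the choice of root added at step $k$ that the roots supporting $\psi_{\F'}$ are linearly independent in $X^*(T)$, which reduces to a routine but case-dependent combinatorial check on the staircase shape underlying $\F_n^{(2)}$.
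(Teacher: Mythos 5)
Your overall strategy matches the paper's: bound the domain dimension of each nonconstant summand $\F'$ below by $\Di{D_{\F_n}}+1$, establish that $\F'$ is generic in the sense of the remark after C2, and then invoke C2. The dimension count you give (root exchange and Weyl conjugation preserve $\Di{D}$ since the pairing in an $\eu$ step forces $\Di X=\Di Y$, and each of the $\frac{(n-2)(n-3)}{2}$ one-dimensional expansions adds exactly one root) is correct and is what the paper leaves implicit.

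The genericity step is where your proposal breaks down, and the break is not just an unfinished ``routine check'': the linear-independence fact you are hoping for is actually false. Take $n=5$ and the first expansion, which is along $U_{(3,8)}$. At that point the nontrivial characters of $\psi_{\F'}$ sit on the ten roots $(1,3),(2,5),(3,5),(4,7),(5,7),(6,9),(7,9),(8,10),(9,10),(3,8)$. The first nine of these span the full rank-$9$ root lattice of $GL_{10}$ (the associated graph on $\{1,\dots,10\}$ is connected), so the stabilizer in $T$ of those nine characters is the center, and $e_3-e_8=\alpha_{3,5}+\alpha_{5,7}+\alpha_{7,9}+\alpha_{9,10}-\alpha_{8,10}$ lies in their span. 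Consequently that stabilizer acts \emph{trivially} on the new $U_{(3,8)}$-character; the torus alone has no open orbit passing through $\F'$, and your argument produces no genericity at all. This is not an accident of $n=5$: because the support of $\psi_{\F_n^{(2)}}$ is already a connected spanning configuration, the torus stabilizer you want to use is central at every subsequent expansion.

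The paper circumvents exactly this by a two-stage conjugation coupled to a particular ordering of the expansions (maximal $j-i$ first, and within a fixed $j-i$ bottom-to-top): given the expansion along $U_{(r,l)}$ and the nonconstant summand $\psi_a$, one first conjugates by an element $u_a\in U_{(r+2,l)}(F)$ chosen so that $\psi_a^{u_a}(U_{(r,r+2)})=1$, which removes one of the constraints on $T$ and frees up a one-parameter subtorus; then a torus element with $t_i=a^{-1}$ for $i\leq r-2$ or $i=r$ and $t_i=1$ otherwise carries $\psi_a^{u_a}$ to $\psi_1^{u_1}$. That the expansions are taken in the stated order is what guarantees $U_{(r+2,l)}$ is available to normalize $D_{\psi_a}$ at the right moment. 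Without the unipotent step your proposal cannot reach genericity, and without an ordering you have no control over which $U_{(r+2,l)}$ is usable; both of these need to be added.
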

   \begin{proof}[proof up to C2]
   We first make a specific choice in the order of the $\frac{(n-2)(n-3)}{2}$ Fourier expansions. Then for this order we will observe that the nonconstant terms after each Fourier expansion are conjugate to each other (not the nonconstant terms from different Fourier expansions). Conjugate terms are defined by the same FC, and this will finish the proof.

   The order is described as follows:\begin{enumerate}\item As in the example of an order that was mentioned previously, each Fourier expansion is along a group $U_{(i,j)}$ for which $j-i$ is maximal among the root subgroups that are left.
   \item In each line $j-i=(\text{fixed number})$, that we do Fourier expansions, we start from the bottom and we move upwards.   \end{enumerate}
   Assume that we did several of these Fourier expansions, and $\F^*_{n,*}$ is one of the nonconstant terms that we have obtained (including the case $\F^*_{n,*}=\F_{n,*}$). Let $U_{(r,l)}$ be the root subgroup of the next Fourier expansion. Let $u_{r,l}(x)$ be the element of $U_{(r,l)}$ for which the $(r,l)$ matrix entry is equal to $x$. For each $a\in F^*$ let $\psi_a^0:U_{(r,l)}(F)\s U_{(r,l)}(\A)\ri \mathbb{C} $ be the  additive character  given by $\psi^0_a(u_{r,l}(x))=\psi_F(ax)$. Let $\psi_a:=\psi_a^0\circ\psi_{\F_{n,*}^*}$. We need to see that the $\psi_a$ are conjugate to each other. We see this in the following two steps.  
   \begin{enumerate}
   \item  Notice that the elements of  $U_{(r+2,l)}(F)$ act on $D_{\psi_a}$. We can choose one of them (that we call $u_a$) so that $\psi_a^{u_a}(U_{(r,r+2)})=1$. 
   \item Let $t=(t_i)_{i\leq n}$ be the torus element for which:
   \begin{itemize}\item $t_i=a^{-1}$ for $i\leq r-2$ and $i=r$
   \item $t_i=1$ in all the other cases.
   \end{itemize}
   Then we can see that $\psi_a^{u_at}=\psi_1^{u_1}$.
     \end{enumerate}  
   \end{proof}    
  We continue with the study of $\Z_n$. We saw in the $n=5 $ case that the understanding of $\Z_5$ was reduced to some extent to $\Z_4$ and $\Z_3$ by the formulas $$\Z_{5,0}=\Z_4\circ\J_5\quad\text{ and }\quad \Z_{5,a_1}=\CTS\circ\Z_3\circ\I_5.$$ With identical $\eu, \e, \co, \CTS\circ$  steps we obtain identical formulas for all $n$. I give the details. Let $Q_n$ be the $GL_{2n}$-parabolic that has Levi $GL_1^2\times GL_{2}^{n-2}$. Consider the decomposition $$U_{Q_n}/(U_{Q_n},U_{Q_n})=N_{1\times 2}\times N_{2\times 2}^{n-2}\times N_{2\times 1}$$ which is described in \ref{Appen0} (definition \ref{U/UU}).  We will represent $\Z_n$ by the first three $N_{2\times 2}$ copies (starting from the upper left). Then we have the same diagrams as in the $n=5$ case and by processing them in the same way we have
   
  \begin{equation}\label{recurence}\Z_{n,0}=\Z_{n-1}\circ\J_n\quad\text{ and }\quad \Z_{n,a_1}=\CTS\circ\Z_{n-2}\circ\I_n, \end{equation} 
  where \begin{itemize}\item $J_n:=\Z_{n,0,N_0}$ for $N_0$ being the intersection of $U_{Q_n}$ with the group generated by $U_{(i,j)}$ with $i\leq 2$, and $i<j$, 
  \item and $I_n:=\Z_{n,0,N_1}$ for $N_1$ being the intersection of $U_{Q_n} $  with the group generated by $U_{(i,j)}$ with $i=1,2,3,5$ and $i<j$, except $U_{(5,7)}$ which is the domain of $\CTS$.

  \end{itemize}
  To express the direct consequences of the identities in \ref{recurence} we define two functions $f_0,f_1:\cup_n\N_{2n}^{Q_n}\ri \cup_n\N_{2n}^{Q_n}$, and certain subsets $A_n$, of $\N^{Q_n}_{2n}$ :
    \begin{itemize}
    \item $f_0|\N^{Q_{n-1}}_{2n-2}\subset \N^{Q_n}_{2n} $ is given by $f_0(n_1,n_2,n_3,...)^{Q_{n-1}}=(n_1+1,1,n_2,n_3,...)^{Q_n}$
    \item $f_1|\N^{Q_{n-2}}_{2n-4}\subset \N^{Q_n}_{2n} $ is given by $f_1(n_1,n_2,n_3,...)^{Q_{n-2}}=(3,n_1+1,n_2,n_3,...)^{Q_n}$. 
    \item $A_2:=\{(3,1)^{Q_2}  \}$, $A_3:=\{(4,1^2)^{Q_3}, (3,3)^{Q_3}\}$, $A_n:=f_0(A_{n-1})\cap f_1(A_{n-2}).$

  \end{itemize}
  
  We now obtain from (\ref{recurence}) that the only candidates for orbits $a\in(\F_{n})_{\RPn} $ for which $\Di{a}=\Di{D_{\F_n}}$, are the ones inside $A_n$.
  
  So far just by calculating a few cases of the dimensions of elements of $A_n$, I suspect that the only orbit with dimension $D_{\F_n} $ is $(n+1,1^{n-1})$. I expect to have this issue (which is only combinatorics) settled in the next version of this article. 
  
  \subsubsection{$\F_{n,k}=(n)\circ(k,2^{n-1})$ for $k>2$}
  For $(k,2^{n-1})$ we will use it's version inside $\RPdnv{0}$. We define $Q_{n,k}$ to be the parabolic subgroup of $GL_{2n+k-2}$ with Levi $GL_1^{k}\times GL_2^{2n-2}$ (were one of the $GL_1$ embeds in the lower right corner, and the rest in the upper left corner). We have $$\F_{n,k}=\F_n\circ(k-1,1^{2n-1}). $$ This version of $\circ(k-1,1^{2n-1})$ has a stabilizer that contains all the steps $\eu,\e,\co$ that where used in $\F_n$ which leads to an identical proof up to the point of obtaining the analogue of (\ref{recurence}). Then the functions $f_{0,k}$, $f_{1,k}:\cup_n\N_{2n+k-2}^{Q_{n,k}}\ri\cup_n\N_{2n+k-2}^{Q_{n,k}}$ and the sets $A_{n,k}$ that  will contain (among other elements) all orbits of $(\F_{n,k})_{\RPn}$ of dimension $\Di{D_{\F_{n,k}}}$, are given by:
 \begin{itemize} 
  \item $f_{0,k}|\N^{Q_{n-1,k}}_{2n+k-4}\subset \N^{Q_{n,k}}_{2n+k-2} $ is given by $f_{0,k}(n_1,n_2,n_3,...)^{Q_{n-1,k}}=(n_1+1,1,n_2,n_3,...)^{Q_{n,k}}$.
      \item $f_{1,k}|\N^{Q_{n-2,k}}_{2n+k-6}\subset \N^{Q_{n,k}}_{2n+k-2} $ is given by $f_{1,k}(n_1+k,n_2,n_3,...)^{Q_{n-2,k}}=(3+k,n_1+1,n_2,...)^{Q_{n,k}}.$ 
      \item $A_{2,k}:=\{(k+1,1)^{Q_{2,k}}  \}$, $A_{3,k}:=\{(k+2,1^2)^{Q_{3,k}}, (k+1,3)^{Q_{3,k}}\}$, $A_{n,k}:=f_{0,k}(A_{n-1,k})\cap f_{1,k}(A_{n-2,k})$
 \end{itemize}
 The proposition that we obtain is:
 \begin{prop}[up to \gC 2 ] Let $k,n\geq 2$. The orbits of dimension $\Di{D_{\F_{n,k}}}$ that occur in $(\F_{n,k})_{\RPn}$ belong to the set $A_{n,k}$. \end{prop}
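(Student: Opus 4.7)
The plan is to imitate, step by step, the derivation already carried out for $\F_n = (n)\circ(2^n)$, exploiting the factorization
$$\F_{n,k} = \F_n \circ (k-1, 1^{2n-1}),$$
where $(k-1,1^{2n-1})$ is realized compatibly with the $\RPdnv{0}$-realization of $(k,2^{n-1})$. The decisive observation, already used implicitly in the passage from $\F_4$ to $\F_{4,k}$, is that this specific realization of $(k-1,1^{2n-1})$ has a stabilizer containing every parabolic inside which the $\eu$, $\e$, $\co$ and $\CTS\circ$ operations of the $\F_n$-analysis are carried out. Each diagrammatic step therefore lifts verbatim from $\F_n$ to $\F_{n,k}$, and the only effect of the extra block is to replace the Levi $GL_1^2\times GL_2^{n-2}$ of $Q_n$ by the Levi $GL_1^k\times GL_2^{n-2}$ of $Q_{n,k}$.

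First I would perform the analogues of the initial $\eu(Y_n,X_n)$ and $\co(w)$ steps, followed by the same $\frac{(n-2)(n-3)}{2}$ one-dimensional Fourier expansions taken in the same order, and split
$$\F_{n,k} = \Z_{n,k} + \F_{n,k,*}.$$
The lemma (up to \gC 2) that disposed of $\F_{n,*}$ transfers without change, since its proof only required that the nonconstant characters arising at each expansion form a single orbit under a diagonal torus; so $\F_{n,k,*}$ contributes only orbits of dimension strictly exceeding $\Di{D_{\F_{n,k}}}$ and may be ignored.

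Next I would analyze $\Z_{n,k}$ by the same $2\times 2$-block Fourier expansion used for $\Z_n$, writing $\Z_{n,k} = \Z_{n,k,0} + \sum_{a\in F^*}\Z_{n,k,a}$. After a single conjugation, \gC 2 discards every branch with $a \notin \{0,a_1\}$, and the two surviving branches satisfy the analogues of (\ref{recurence}),
$$\Z_{n,k,0} = \Z_{n-1,k}\circ\J_{n,k}, \qquad \Z_{n,k,a_1} = \CTS\circ\Z_{n-2,k}\circ\I_{n,k}.$$
A direct unipotent computation identifies convolution with $\J_{n,k}$ (respectively with $\CTS\circ(\cdot)\circ\I_{n,k}$) as the transformation $f_{0,k}$ (respectively $f_{1,k}$) on $Q_{\cdot,k}$-flagged partitions. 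Induction on $n$, with the base cases $A_{2,k}$ and $A_{3,k}$ verified directly as the $k$-generalizations of Examples~2 and~3, then gives the desired containment.

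The main obstacle is the twofold appeal to \gC 2, once to kill $\F_{n,k,*}$ and once to kill the $a\notin\{0,a_1\}$ branches of $\Z_{n,k}$. Without it, one would have to verify term by term, as done for $n\le 4$ in the earlier examples, that every discarded summand produces only strictly larger orbits in its $\RPn$-expression; the bookkeeping for this grows rapidly with $n$. A second, milder care is needed because one $GL_1$-block of $Q_{n,k}$ sits in the lower-right corner rather than the upper-left, which shifts the formula defining $f_{1,k}$; this asymmetry is already built into the definitions of $f_{1,k}$ and $A_{n,k}$ and simply has to be propagated correctly through the induction.
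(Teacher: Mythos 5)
Your proposal is correct and tracks the paper's own derivation essentially step for step: factor $\F_{n,k}=\F_n\circ(k-1,1^{2n-1})$, observe that the chosen realization's stabilizer contains every $\eu$, $\e$, $\co$, $\CTS\circ$ step used for $\F_n$, lift the split $\Z_{n,k}+\F_{n,k,*}$, discard $\F_{n,k,*}$ and the $a\notin\{0,a_1\}$ branches by \gC~2, read off the recurrences as $f_{0,k}$ and $f_{1,k}$, and induct on $n$. The paper states this even more tersely than you do; you have merely made its appeals explicit.
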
 
 
 In contrast to the case $k=2$, for $k>2$ we can find more than one orbits in $A_{n,k}$ with dimension $\Di{D_{\F_{n,k}}}$. For example when $k=3$ and $n=2m+1$, one such orbit is: 
 $$f_{1,k}^m(4,3)^{Q_{3,3}}=(4^m,3)^{Q_{n,3}}. $$
Hence one of the corollaries that we can obtain is the following.
\begin{cor}[up to \gC 2] let $\pi$ be any $GL_{4m+2}$-automorphic representation for which $ \Om{(\pi)}=(4^m,3)$. Then $(2m+1)\circ(3,2^{2m})\pi$ is nonzero Eulerian.\end{cor}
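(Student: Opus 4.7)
The corollary follows by specializing the preceding proposition to $n = 2m+1$, $k = 3$, and combining it with Proposition \ref{minimalorb} and Corollary 2 of Proposition \ref{smallc}. Three ingredients need to be checked: (i) $(4^m, 3)$ lies in the candidate set $A_{2m+1, 3}$; (ii) $\Di{(4^m, 3)} = \Di{D_{\F_{2m+1, 3}}}$; and (iii) these two imply both nonvanishing and Eulerianness of $\F_{2m+1, 3}(\pi)$ on representations $\pi$ with $\Om(\pi) = (4^m, 3)$.

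Ingredient (i) is essentially asserted in the paper via $f_{1,3}^m((4,3)^{Q_{3,3}}) = (4^m, 3)^{Q_{2m+1, 3}}$. I would verify this by induction on $m$: the base case $(4, 3)^{Q_{3, 3}} \in A_{3, 3}$ is immediate from the explicit definition $A_{3, 3} = \{(5, 1^2)^{Q_{3, 3}}, (4, 3)^{Q_{3, 3}}\}$, and each inductive step applies the defining recursion of $A_{n, 3}$ together with the formula for $f_{1, 3}$. Ingredient (ii) is a direct dimension count: one applies the standard formula $\Di{\lambda} = N^2 - \sum_i (\lambda^t_i)^2$ to $\lambda = (4^m, 3)$ in $GL_{4m+3}$ and compares with the root-space count of $\Di{D_{\F_{2m+1, 3}}}$ coming from the definition of $(2m+1) \circ (3, 2^{2m})$. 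This should reduce to elementary arithmetic with no real surprises.

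For (iii), since $\Di{(4^m, 3)} = \Di{D_{\F_{2m+1, 3}}}$, the orbit $(4^m, 3)$ must be minimal in any expression in $(\F_{2m+1, 3})_{\RPn}$, because any orbit of strictly smaller dimension would make the corresponding $\PP{\cdot}$ vanish on all automorphic functions. Proposition \ref{minimalorb} then yields $\F_{2m+1, 3}(\pi) \neq 0$. Similarly, \gC 2 forces $(4^m, 3)$ to appear in only finitely many summands of the expression; by Corollary 2 of Proposition \ref{smallc}, only this finite sub-sum contributes to $\F_{2m+1, 3}(\pi)$. Since each summand lies in $\RPnv{0}$ and the $\eu$, $\co$, $\CTS \circ$ operations linking them preserve Eulerianness, the total evaluates to an Eulerian function.

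The main obstacle will be justifying this last step cleanly: one must verify that the finitely many terms in the relevant sub-sum are torus-conjugate representatives of a single $\RPnv{0}$-class, so that their contributions combine into a single Euler product rather than a genuine sum of inequivalent ones. This should follow from a careful inspection of the iterated $f_{1, 3}$-recursion appearing in \eqref{recurence} and the torus-conjugacy of characters implicit in the $\eu$ steps, but tracking the conjugacies through the whole induction will require some bookkeeping.
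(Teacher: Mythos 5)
Your route is essentially the paper's own: specialize the preceding proposition to $n=2m+1$, $k=3$, note that $(4^m,3)$ arises as the $f_{1,3}$-iterate of $(4,3)^{Q_{3,3}}$ and has dimension equal to $\Di{D_{\F_{2m+1,3}}}$, and then invoke \gC 2 together with Proposition \ref{minimalorb}, Proposition \ref{smallc}, Lemma \ref{CTS} and Proposition \ref{Eukerianf} to get nonvanishing and Eulerianity. However, three of your steps need repair. First, in (ii) you propose to compare the adjoint-orbit dimension $N^2-\sum_i(\lambda^t_i)^2$ with $\Di{D_{\F_{2m+1,3}}}$; taken literally this fails by a factor of $2$: for $\lambda=(4^m,3)$ in $GL_{4m+3}$ the orbit dimension is $6(m+1)(2m+1)$, while $\Di{D_{\F_{2m+1,3}}}=(2m+1)^2+2(2m+1)+m(2m+1)=3(m+1)(2m+1)$. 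The quantity $\Di{a}$ for an orbit in \gC 2 must be read as the dimension of the domain of the FC attached to $a$ (equivalently half the adjoint-orbit dimension); with that reading the two sides agree, since the unipotent radical of the parabolic with Levi $GL_{m+1}\times GL_{m+1}\times GL_{m+1}\times GL_m$ attached to $(4^m,3)$ has dimension $3(m+1)(2m+1)$ (calibrate against the first example: $(2)\circ(2^2)\sim(3,1)$ has domain dimension $5$ while the orbit $(3,1)$ has adjoint dimension $10$). Second, your justification of minimality in (iii) is wrong as stated: an FC attached to a smaller orbit does not vanish on all automorphic functions. The correct argument is that if $(4^m,3)$ were not minimal in the expression, some orbit strictly below it in the dominance order would occur, hence some minimal orbit of dimension strictly less than $\Di{D_{\F_{2m+1,3}}}$ would occur, contradicting the inequality $\Di{a}\geq\Di{D_\F}$ of \gC 2.

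Third, the "main obstacle" you identify at the end is not how the paper closes the argument, and the conjugacy bookkeeping you propose is not needed: the orbit $(4^m,3)$ is produced only by iterating the $\Z_{n,a_1}$-branch of (\ref{recurence}), and $a_1$ is unique at each stage, so exactly one summand of the expression carries this orbit (this is precisely the difference between $k>2$ and $k=2$ that the paper already stresses for $(4)\circ(k,2^3)$, where only $\F_{4,00}$ contributes the relevant orbit). Eulerianity of that single summand on $\pi$ with $\Om(\pi)=(4^m,3)$ then follows from Proposition \ref{Eukerianf} transported through Proposition \ref{smallc} and Lemma \ref{CTS}, adelic integration preserving Eulerianness; there is no finite sum of inequivalent Euler products to merge. (Also note that the paper's $GL_{4m+2}$ should be $GL_{4m+3}$, consistent with your own dimension count, and the recursion $A_{n,k}:=f_{0,k}(A_{n-1,k})\cap f_{1,k}(A_{n-2,k})$ must be read with a union for your induction in (i) to make sense.)
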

\section{Appendix} 
\subsection{Definitions}
\label{Appen0}
 \begin{enumerate}
\item By $\R$ we denote the set of unipotent subgroups of $GL_n$ that are generated by root subgroups. 
\item By $\RN{L}$ we denote the subset of $\R$ containing the groups that are normalized by $L$.
 \item The notations $\R$, and $\RN{L}$ will also be used for the sets of FCs $\F$ with $D_\F\in\R\text{ or }\RN{L}$ respectively.
 \item  If for a FC $\F $ we have that $D_{\F}$ is a unipotent radical of a parabolic subgroup $P$ of $GL_n$, then we say $\F$ is an $\R(P)$ FC. We also define $\RP:=\cup \R(P),  $  where the union is over all parabolic subgroups $P$ (of a fixed General linear group).
\item We define an order in the positive roots by: $a<b\iff b-a$ is a positive root. 
 
 \item Let $\R_{\nless}:=\{\F\in\R :  \F(U_{a_i})\neq 1\text{ for two roots }a_1,a_2\implies a_1\nless a_2 \}$.For $X$ being any subset of $\R,\RN{L},\R(P)$ we define $X_{\nless}=X\cap \R_{\nless}$.
 \item Let $\F\in \R_{\nless} $ Then the roots $a$ such that $\F(U_a)\neq 1$ form a base for a root system of the form $\Phi(\F):=A_{n_1}\times A_{n_2}\times...A_{n_k}$, where $(n_i-n_j)(i-j)\geq 0$. We will denote by $[n_1+1,n_2+1,...n_k+1,1,1,...1]$ the subset of $\R_{\nless}$ consisting of the $\F$ with $\Phi(\F)$ being the previous root system. Here the number $1$  occurs $n-k-\sum_{1\leq i\leq k}n_i$ times (equivalently: so that the numbers inside the brackets form a partition of $n$). Some times by $\Phi(\F)$ we will denote the orbit $(n_1+1,n_2+1,...,1,1,...)$ instead of the root system. We also define $V(\F)$ to be the set of positive root vector subgroups corresponding to the base of $\Phi(\F)$. If any of the $n_i$ occur more than once we slightly condense the notation. For example instead of $[3,2,2,1]$ we will say $[3,2^2,1]$. 
 \item $\PP{n_1,n_2,...}:=\RPn\cup [n_1,n_2,...] $.
 \item If $\F$ is as in the previous definition and it belongs to $\RNn{U_n}$, we will say that $(n_1+1,n_2+1,...,1,1...)$ is the unipotent orbit corresponding to $\F$. This correspondence is an extension of the way FC are usually attached to unipotent orbits. Although this definition makes sense for all elements of $\R_{\nless}$, I do not plan to extend it to all of them because of the example after the small conjecture in \ref{smallcsubsection}.
\item\label{U/UU} Let $Q=M_QU_Q$ be a parabolic subgroup of $GL_n$. Then $M_Q$ acts on the vector space $U_Q/(U_Q,U_Q)$. Let $N_1\times N_2\times...N_r$ be the decomposition of $U_Q/(U_Q,U_Q)$ into irreducible representations of $M_Q$, such that each $N_i$ is the image of root subgroups. We will identify $N_i$ with the group that is generated by the root subgroups that embed into $N_i$. Assume the order of the $N_i$ in the previous decomposition is such that as $i$ increases the raw numbers of the $U_a$ that are contained in $N_i$ increase. For any set $X $of roots, and $X^\prime $ the corresponding set of root subgroups, denote by $r(X)$ the smallest $i$ for which $X^\prime$ maps nontrivially to $N_i$. Now let $\F$ be as in 7. Let $\Phi(\F)=A_{l_1}\times A_{l_2}\times...A_{l_k}$ so that $(r(A_{l_i})-r(A_{l_j})(i-j)>0$. We then attach to $\F$ the ordered partition $n=(l_1+1)+(l_2+1)+...=(l_k+1)+1+...$, witch we will denote by $\Phi(\F)^Q:=(l_1+1,l_2+1,...l_k+1,1,1,...)^Q$. I have not decided yet where it is better to put each of the $1$ in the partition, so I will just allow them to be in any place inside the otherwise ordered partition. However when I use the concept of ordered partitions in this article, I do put each of the $1$ in the position that I suspect is the most natural. The reader can safely choose to ignore where I put each $1$.

The set of these ordered partitions is denoted by $\N_n^Q$. The order that we use for this set is defined by the way it projects to $\N_n$. I give the details. Let $p:\N_n^Q\ri\N_n$ be defined by $p(n_1,n_2,...)^Q:=(n_1,n_2,...)$. Then for two elements $a, b\in\N_n^Q$ we say $a>b$ if and only if $p(a)>p(b)$. 
\item\label{PQQ} By $\PP{n_1,n_2,...}^Q$ we define the set of elements  $\F\in\RPn$ for which $\Phi(F)^Q=(n_1,n_2,...)^Q$.
\item \label{characters} Finally all the definitions that we gave for FCs, naturally produce definitions for characters.

\end{enumerate}

\subsection{Some facts about unipotent orbit Fourier coefficients}
\label{UOrb prerequisites}
The following propositions are among conjectures that are formulated by D. Ginzburg, who also gave a proof for them in many cases (\cite{Ginzbconj},\cite{Ginzeul}). D. Jiang and B. Liu have also contributed in this topic in (\cite{JiangLiu}). I am informed that the results in (\cite{JiangLiu}) for the discrete spectrum, are extended in (\cite{Liu}) to the rest of the automorphic spectrum. Hence I expect that almost everything (or everything)  that is stated here without a proof and is not fully proved in the literature, will be one of the corollaries of this work.

 If I see that anything remains unproven among the propositions that I state, I will put in a next version of this article a proof for it. It will not have to be anything beyond an almost identical argument to (\cite{Ginzeul}, proposition 1). 
\begin{prop}\label{singleton}For every $GL_n$ automorphic representation $\pi$, $\Om(\pi)$ consist of one element (and we usually say $\Om(\pi)=a$ instead of $\Om(\pi)=\{a\}$).
\end{prop}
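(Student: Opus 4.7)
The plan is to reduce to the building blocks of the automorphic spectrum of $GL_n$ via the Mœglin--Waldspurger classification. Every irreducible automorphic representation $\pi$ of $GL_n$ is a Langlands quotient of an Eisenstein induction $\mathrm{Ind}_P^{GL_n}(\sigma_1\otimes\cdots\otimes\sigma_r)$ from a standard parabolic $P=MU$ with $M=GL_{n_1}\times\cdots\times GL_{n_r}$, where each $\sigma_i$ is discrete-spectrum on $GL_{n_i}$; by Mœglin--Waldspurger, each $\sigma_i$ is itself a Speh residue $\Delta(\tau_i,k_i)$ with $\tau_i$ cuspidal on $GL_{m_i}$ and $n_i=m_ik_i$. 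So it suffices to prove $\Om$ is a singleton for cuspidal representations, then for Speh representations, and finally that the singleton property is preserved under parabolic induction.

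First I would handle the cuspidal case: for $\tau$ cuspidal on $GL_m$, Shalika's theorem says the Whittaker coefficient is nonzero, so $(m)\in\Om(\tau)$; since $(m)$ is the maximum of $\N_m$, it is the unique maximal element and $\Om(\tau)=\{(m)\}$. Next, for the Speh representation $\Delta(\tau,k)$ on $GL_{mk}$, I would invoke the Ginzburg--Rallis--Soudry / Jiang--Liu analysis: unfolding the Eisenstein series that defines the residue against the FC attached to a partition $\mu$ produces an integral over the Levi $GL_m^k$ of products of $\tau$-Fourier coefficients; by cuspidality of $\tau$, this vanishes unless $\mu\le (k^m)$, and one checks directly that the FC attached to $(k^m)$ is nonzero (in fact Eulerian), giving $\Om(\Delta(\tau,k))=\{(k^m)\}$.

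For the inductive step I would show that if $\Om(\sigma_i)=\{\lambda^{(i)}\}$ for each $i$, then $\Om(\mathrm{Ind}_P^{GL_n}\sigma)$ is the singleton consisting of the concatenation-and-sort of the $\lambda^{(i)}$. Given a FC $\F_\mu$, one decomposes the integral over $U_{\mu}(F)\s U_{\mu}(\A)$ applied to the Eisenstein series using the Bruhat decomposition $P(F)\backslash GL_n(F)/U_\mu(F)$. The crucial point is that all double cosets whose representative does not match $\mu$ to the block structure of $M$ contribute zero by cuspidality of the $\tau_i$'s, while the ``diagonal'' coset contributes the product of the $\F$'s on each factor. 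Root exchange (lemma in \ref{exchange}) is the main technical device for rearranging the resulting integrals into products of FCs on the Levi factors; the singleton property for $\pi$ then follows from the singleton property on each factor and the order-preservation of concatenation in the dominance order.

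The main obstacle is the unfolding step in the inductive case: one must control all Bruhat cells carefully and argue that the only surviving contribution matches the block structure of $M$, which requires a nontrivial combinatorial argument about which $\mu\in\N_n$ admit a representative in $GL_n(F)$ intertwining $\psi_\mu|_{U_\mu\cap P}$ with a nondegenerate character on each $GL_{n_i}$-block. This is the content of the Ginzburg conjectural framework cited in the sentence preceding the proposition; in the cases proved in \cite{Ginzbconj}, \cite{Ginzeul}, \cite{JiangLiu}, \cite{Liu} (which together cover the full automorphic spectrum), the argument goes through and yields the proposition.
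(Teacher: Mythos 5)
First, note that the paper does not actually prove Proposition \ref{singleton}: in \ref{UOrb prerequisites} it is quoted as one of Ginzburg's conjectures, proved in the cases covered by \cite{Ginzbconj}, \cite{Ginzeul}, \cite{JiangLiu}, \cite{Liu}, with the remark that any remaining case would be settled by ``an almost identical argument to (\cite{Ginzeul}, proposition 1)''. Your reduction --- M\oe glin--Waldspurger classification, cuspidal case via genericity, Speh case by unfolding the residual Eisenstein series, then an inductive step over parabolic induction --- is exactly the route those references take; and since your last paragraph ultimately defers the crucial Bruhat-cell/unfolding analysis back to the same references, what you have is a roadmap of the cited proof rather than an independent argument, which is consistent with how the paper itself treats the statement.

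That said, two of your stated formulas are wrong, and the second affects the logic of your inductive step. For the Speh representation $\Delta(\tau,k)$ with $\tau$ cuspidal on $GL_m$, the attached orbit is $(m^k)$ (the part $m$ repeated $k$ times), not $(k^m)$: compare Proposition \ref{speh}, which gives $(a^b)$ for the Speh representation of $GL_{ab}$ built from $GL_a$; your formula already fails for the trivial representation ($m=1$, $k=n$), which it would declare generic. More seriously, for the induced representation the unique maximal orbit is the componentwise \emph{sum} of the partitions $\lambda^{(i)}$ (the induced orbit), as in Proposition \ref{induction}, not their ``concatenation-and-sort''. Concatenation gives a strictly smaller orbit in general: for an Eisenstein series built from two generic cuspidal representations of $GL_{n_1}$ and $GL_{n_2}$ your rule would give $(n_1,n_2)$, i.e.\ it would deny the well-known nonvanishing of the Whittaker coefficient of such an Eisenstein series, whose orbit is $(n_1+n_2)$. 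Consequently the picture ``only the coset matching $\mu$ to the block structure of $M$ survives and contributes the product of FCs on the Levi factors'' computes the wrong maximal orbit as you state it; the nonvanishing at the larger (sum) orbit is precisely where the root-exchange and unfolding work of \cite{Ginzeul} and \cite{JiangLiu} is needed, and it cannot be read off from that single cell.
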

Before stating the next proposition we give some definitions and a few elementary facts. 
 Consider all the parabolic subgroups that have as a Levi the group 
$$M:=GL_{r_1+...+r_m}\times GL_{r_2+...r_m}...\times GL_{r_m}. $$ Let $A:=\cup \Rn{P}$ where the union is over these parabolic subgroups. Then the biggest unipotent orbit $a$ for which $[a]\cap A$ is nonempty, is:
$$a_M=(m^{r_m},(m-1)^{r_{m-1}},...1^{r_1})$$ 

Let $P$ be one of the previous parabolic subgroups. Write $M$ in the form $GL_{l^P_1}\times GL_{l^P_2}...\times GL_{l^P_k}$ so that when $M$ is realized as a Levi of $P$, $GL_{l^P_1}$ is on the upper left of $GL_n$, and as $i$ increases the sequence $(GL_{l^P_i})$ goes down and right . Then the set
$$\Rnv{0}{P}:=\Rn{P}\cap[a_M]$$ is nonempty iff as $i$ increases  $l^P_{i+1}-l^P_i$ is always negative or starts positive and changes sign at most once. In the cases that $\Rnv{0}{P}$ is not empty, it's elements are conjugate to each other. They also belong to the open orbit of the action (by conjugation) of $M$ on the set of FCs that have as domain the unipotent radical of $P$.

We pay special attention to the two cases where $l^P_{i+1}-l^P_i $ is always positive or always negative. We define the following operators $\uparrow, \downarrow$ for them:
\begin{defi}Consider a parabolic subgroup $P$ of $GL_n$ with a given Levi $M$. Let $\mathcal{P}_M$ be the set of parabolic subgroups with Levi $M$. For each  $Q\in \mathcal{P}_M$ consider the expression $M=GL_{l^Q_1}\times GL_{l^Q_2}...\times GL_{l^Q_k} $ which is defined as previously. Then $P\up$ is the unique element of $\mathcal{P}_M$ for which $l^{P\up}_{i+1}-l^{P\up}_i\leq 0$ for all $i$. Similarly $P\down$ is the unique element of $\mathcal{P}_M$ for which $l^{P\down}_{i+1}-l^{P\down}_i\geq 0$ for all $i$.
\end{defi}

Finally we define $$\RPnv{0}:=\cup_P\Rnv{0}{P}\quad\RPunv{0}:=\cup_P\Rnv{0}{P\up}\quad
\RPdnv{0}:=\cup_P\Rnv{0}{P\down}$$

\begin{prop}\label{Eukerianf}
 Let $\F\in \Rnv{0}{P}$. Let $\pi$ be a $GL_n$-automorphic representation with $\Om(\pi)=\F$. Then $\F(\pi)$ is a nonzero Euler product.  
\end{prop}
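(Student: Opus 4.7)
The plan is to reduce $\F(\pi)$, via root exchange, to a (product of) Whittaker coefficient(s) on factors of the Levi $M$, and then deduce nonvanishing from the hypothesis $\Om(\pi)=a_M$ and Eulerianess from the uniqueness of Whittaker models on $GL_n$.

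First, as stated in the paragraph preceding the proposition, the elements of $\Rnv{0}{P}$ form a single open $M$-orbit and are $M(F)$-conjugate to each other. Since automorphic functions on $GL_n$ are invariant under $M(F)$-conjugation, it suffices to establish the result for one convenient $\F\in\Rnv{0}{P}$. I would take $\F$ to be the representative arising from the Jacobson-Morozov $\mathfrak{sl}_2$-triple associated with the nilpotent orbit $a_M$; for this choice the character $\psi_\F$ restricts on the $i$-th Levi block $GL_{r_i+\cdots+r_m}$ to a standard generic character on an upper unipotent subgroup, so that $\F$ has the shape of a ``generalized Whittaker'' coefficient in the sense used by Ginzburg.

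Nonvanishing is immediate: by the definition of $\Om(\pi)$ (Proposition \ref{singleton}) together with the fact that $\Om(\pi)=a_M$, some FC supported on the orbit $a_M$ is nonvanishing on $\pi$. Combined with the $M$-conjugacy and openness observations of the previous paragraph, the specific $\F$ chosen must also be nonvanishing on $\pi$.

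For Eulerianess, the plan is to apply a sequence of root exchange steps $\eu(Y_i,X_i)$ (item 7 of \ref{notdiagrams}) to transform $\F(\varphi)$, up to adelic integration, into a product of standard Whittaker coefficients on the $GL_{r_i+\cdots+r_m}$ factors of $M$. Since an adelic integral of an Euler product is again an Euler product, and since the Whittaker coefficient on each $GL$ factor is Eulerian by the classical local uniqueness of Whittaker models, the claim follows. The main obstacle will be carrying out step (i): at each root exchange one must produce abelian unipotent subgroups $X_i,Y_i$ such that $Y_i$ normalizes the stabilizer $L_0$ of $\psi_{L_0}$ and such that the commutator pairing $(x,y)\mapsto\psi_{L_0}([x,y])$ is nondegenerate. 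The hypothesis $\F\in\Rnv{0}{P}$, i.e.\ that $\psi_\F$ lies in the open $M$-orbit on characters of $U_P/[U_P,U_P]$, is exactly what is needed to guarantee this nondegeneracy at every stage. Once the sequence of exchanges is organized (one natural order is to process the blocks of the partition $a_M$ from the largest part downward), the remainder of the argument parallels almost verbatim the proof of \cite[Proposition 1]{Ginzeul}, with the only change being the passage from classical groups to $GL_n$, which does not affect the mechanics of the root exchange.
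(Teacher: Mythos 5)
First, a point of comparison: the paper itself does not prove Proposition \ref{Eukerianf}. It is placed in \ref{UOrb prerequisites} among statements quoted from the literature --- Ginzburg's conjectures and their partial proofs \cite{Ginzbconj,Ginzeul} and the work of Jiang--Liu and Liu \cite{JiangLiu,Liu} --- with the remark that anything left unproved there would be settled by an argument ``almost identical'' to Proposition 1 of \cite{Ginzeul}. So the real question is whether your sketch could serve as such an argument, and as it stands it cannot.

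The central gap is the Eulerianity step. Unless $a_M=(n)$, every $\pi$ with $\Om(\pi)=a_M$ is non-generic, so its genuine Whittaker coefficients vanish identically. A chain of root-exchange steps only rewrites $\F(\varphi)$ as an adelic integral of one other Fourier coefficient of the same $\varphi$; it never produces a product of coefficients, and the coefficient you can reach this way is a degenerate (semi-Whittaker) one whose Eulerianity is exactly the point at issue --- it does not follow from multiplicity one for Whittaker models on the $GL_{r_i+\cdots+r_m}$ factors of $M$ (whose sizes are the parts of the dual partition of $a_M$, and which in any case do not act on $\F(\varphi)$). What actually yields the Euler product, and what \cite{Ginzeul} does, is the paper's ``way 1'' of \ref{GCsection}: realize $\pi$ explicitly (as an isobaric sum of Speh residues of Eisenstein series attached to cuspidal data $\tau_i$), unfold $\F(\varphi)$ against that realization, and reduce to Whittaker coefficients of the $\tau_i$, where local uniqueness applies; no manipulation of $\F$ that never uses how $\pi$ is constructed can give Eulerianity. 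The nonvanishing claim is also not ``immediate'': $\Om(\pi)=a_M$ gives nonvanishing of the orbit coefficient attached to $a_M$, and transferring this to the open-orbit coefficient on $U_P$ for an arbitrary admissible ordering of the Levi blocks (the various $P$, $P\up$, $P\down$) is itself a nontrivial comparison: the Jacobson--Morozov representative you pick lies in $\Rnv{0}{P}$ only for one ordering, and the different $U_P$ are not $M(F)$-conjugate but only Weyl-conjugate up to exchanging positive and negative roots, so one needs precisely the kind of root-exchange theorem proved in \cite{JiangLiu} (or the paper's Proposition \ref{smallc}). Your closing appeal to Proposition 1 of \cite{Ginzeul} is the right instinct, but the portion of the sketch meant to replace that argument does not do its work.
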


\begin{prop}\label{induction}
Let $\pi$ be a  $GL_n$-automorphic representation that is induced from two smaller automorphic representations $\pi_1$ and $\pi_2$. Then $\Om(\pi)=\Om(\pi_1)+\Om(\pi_2)$ 
\end{prop}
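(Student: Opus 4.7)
The plan is to prove the two inequalities $\Om(\pi)\geq \Om(\pi_1)+\Om(\pi_2)$ and $\Om(\pi)\leq \Om(\pi_1)+\Om(\pi_2)$ separately, using Proposition~\ref{singleton} to conclude equality. Let $P=MU_P$ be the standard parabolic of $GL_n$ with Levi $M=GL_{n_1}\times GL_{n_2}$, so that $\pi$ is built from Eisenstein series (or residues of them) attached to sections of $\mathrm{Ind}_{P(\A)}^{GL_n(\A)}(\pi_1\otimes\pi_2)$. Write $\lambda=\Om(\pi_1)+\Om(\pi_2)$, and by Proposition~\ref{Eukerianf} pick $\F_i\in\RPunv{0}$ on $GL_{n_i}$ attached to $\Om(\pi_i)$ for which $\F_i(\pi_i)$ is a nonzero Euler product.

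For $\Om(\pi)\geq\lambda$, I would construct an explicit $\F\in\RPunv{0}$ on $GL_n$ with $\Phi(\F)=\lambda$ whose domain $D_\F$ contains the block-diagonal embedding $D_{\F_1}\times D_{\F_2}\hookrightarrow M$ together with precisely those root subgroups of $U_P$ that interlace the two Young diagrams into the row-wise sum $\lambda$. Applying $\F$ to an Eisenstein section $E(\varphi_1\otimes\varphi_2,s)$ and unfolding along $D_\F\backslash GL_n/P$, the open cell (the one whose character is stabilized only trivially) produces a main term that, by Fubini and the automorphy of the $\varphi_i$, factors as an adelic integral of $\F_1(\varphi_1)\cdot\F_2(\varphi_2)$ times a standard meromorphic factor in $s$. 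Passing to the residue that defines $\pi$ and choosing $\varphi_i$ so that each $\F_i(\varphi_i)$ is a nonzero Euler product yields $\F(\pi)\neq 0$, which by Proposition~\ref{singleton} gives the lower bound.

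For $\Om(\pi)\leq\lambda$, fix any $\mu\not\leq\lambda$ and any $\F'\in\RPn$ with $\Phi(\F')=\mu$, and aim to show $\F'(\pi)=0$. I would apply $\F'$ to $E(\varphi_1\otimes\varphi_2,s)$ and unfold it along $D_{\F'}\backslash GL_n/P$. For each Weyl cell representative $w$, a sequence of $\eu$ and $\co$ steps (root exchange as in~\ref{exchange}) rewrites the inner integral as a FC on the Levi quotient, namely a product $\F_1^{(w)}(\varphi_1)\cdot\F_2^{(w)}(\varphi_2)$ integrated against auxiliary adelic variables. The decisive combinatorial input is that whenever $\mu\not\leq\lambda$, each such $w$ forces $\Phi(\F_i^{(w)})>\Om(\pi_i)$ for at least one $i\in\{1,2\}$; the definition of $\Om(\pi_i)$ as the maximal nonvanishing orbit then kills every cell, so $\F'(\pi)=0$.

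The main obstacle is precisely this combinatorial claim. Verifying it requires careful bookkeeping of which root subgroups in $D_{\F'}$ lie in $U_P$, in $M$, or in the opposite unipotent, and then translating the resulting Levi FCs back into unipotent orbit language. The substance of the argument is to show that the dominance order on partitions is preserved under the row-wise sum operation in the sense needed here: any orbit strictly above $\Om(\pi_1)+\Om(\pi_2)$ must, when cut by the Levi, strictly dominate at least one factor's orbit. This is the standard pattern in unfolding arguments for FCs on Eisenstein series, and is essentially the content of the analogous statements proved by Ginzburg in~\cite{Ginzeul}; the proof plan above is a direct adaptation, with~\cite{Ginzeul} Proposition~1 serving as the template, as the author indicates at the start of~\ref{UOrb prerequisites}.
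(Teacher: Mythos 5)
The paper does not actually prove this proposition: it is one of the statements in \ref{UOrb prerequisites} that the author explicitly quotes from the literature (Ginzburg's conjectures and partial proofs in \cite{Ginzbconj}, \cite{Ginzeul}, and the work of Jiang--Liu \cite{JiangLiu} and Liu \cite{Liu}), with the remark that any missing case would be handled by ``an almost identical argument to (\cite{Ginzeul}, proposition 1)''. So there is no in-paper proof to compare against, and your proposal should be judged as a standalone argument.

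As such, it is an outline whose two decisive steps are asserted rather than established. The upper bound is carried entirely by the ``combinatorial claim'' that for every $\mu\not\leq\Om(\pi_1)+\Om(\pi_2)$ and every Weyl cell $w$ in the unfolding, the induced coefficient on the Levi satisfies $\Phi(\F_i^{(w)})>\Om(\pi_i)$ for some $i$. You flag this yourself as the main obstacle, but it is not mere bookkeeping: after unfolding, the inner integrals are generally not FCs of the $\pi_i$ attached to single orbits but integrals over partial unipotent groups twisted by adelic variables, and converting them into statements about orbits (so that maximality of $\Om(\pi_i)$ applies) is exactly the technical content of \cite{JiangLiu} and \cite{Liu}; asserting it is equivalent to assuming the proposition's hard half. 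The lower bound has a similar issue: the unfolding of $\F(E(\varphi_1\otimes\varphi_2,s))$ is valid in the range of convergence, while $\pi$ is realized at a point (or residue) outside it, so one must show that after meromorphic continuation the ``open cell'' term survives and the remaining cells do not cancel it or contribute poles --- the step you compress into ``passing to the residue''. In short, the skeleton (two inequalities, unfolding along $D_\F\backslash GL_n/P$, reduction to $\F_1(\varphi_1)\F_2(\varphi_2)$) does match the strategy of the sources the paper leans on, but as written the proposal outsources precisely the parts that make the statement a theorem, so it cannot be counted as a complete proof --- which, to be fair, mirrors the paper's own decision to cite rather than prove it.
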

\begin{prop}\label{speh}
Let $\pi$ be a Speh $GL_{ab}$-automorphic representation that is build from a $GL_a$ generic automorphic representation. Then $\Om(\pi)=(a^b)$. 
\end{prop}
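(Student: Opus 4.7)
I would prove this by induction on $b$. The base case $b=1$ is immediate: $\pi=\sigma$ is generic on $GL_a$, so by definition $\Om(\sigma)=(a)=(a^1)$, combined with Proposition \ref{singleton}.

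For the inductive step, I would realize $\pi$ as the residue, at the appropriate Speh point, of the Eisenstein series on the parabolic $P$ of $GL_{ab}$ with Levi $GL_{a(b-1)}\times GL_a$, induced from $\pi_{b-1}|\det|^{1/2}\otimes \sigma|\det|^{-(b-1)/2}$, where $\pi_{b-1}$ denotes the Speh representation built from $\sigma$ on $GL_{a(b-1)}$. By the inductive hypothesis, $\Om(\pi_{b-1})=(a^{b-1})$.

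For the lower bound, I would choose an FC $\F\in\RPunv{0}$ with $\Phi(\F)=(a^b)$; concretely this is the FC attached to the parabolic with Levi $GL_a^b$, which consists of a Whittaker character on each $GL_a$-block together with the off-diagonal ``connecting'' components. Unfolding $\F$ against the Eisenstein series along $P$-double cosets, the open orbit contribution factors, up to adelic integration, as the product of a $(a^{b-1})$-FC on $\pi_{b-1}$ and a Whittaker functional on $\sigma$. The former is a nonzero Euler product by the inductive hypothesis combined with Proposition \ref{Eukerianf}, and the latter is nonzero by genericity of $\sigma$. Every other double-coset contribution vanishes by the standard root-exchange arguments discussed in \ref{exchange}. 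The residue operation preserves this nonvanishing at the Speh point, so $\F(\pi)\neq 0$, giving $\Om(\pi)\geq (a^b)$.

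For the upper bound, suppose $\lambda > (a^b)$ in dominance order on partitions of $ab$. Then necessarily $\lambda_1 > a$. For any FC $\F_\lambda\in\RPnv{0}$ attached to $\lambda$, I would unfold against the same Eisenstein series and analyze each $P$-double coset. Each surviving contribution at the Speh pole, after root exchange and Jacquet-module reduction along the Levi $GL_{a(b-1)}\times GL_a$, produces on the $GL_a$-factor an FC attached to a partition of $a$ with first part $\geq \lambda_1 > a$. No such partition of $a$ exists, so every contributing integrand vanishes identically, and hence $\F_\lambda(\pi)=0$. Combined with the lower bound and Proposition \ref{singleton}, this yields $\Om(\pi)=(a^b)$.

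The main obstacle is the upper-bound step: one must rule out that \emph{any} of the non-open $P$-double coset contributions can support an FC attached to some $\lambda > (a^b)$ after taking the residue. The reduction to ``an FC on $\sigma$ attached to a partition with $\mu_1>a$'' has to be done uniformly in $\lambda$, and is the technical heart of the argument. It is essentially an adaptation of the method of (\cite{Ginzeul}, Proposition 1), which the paper itself mentions as the expected route.
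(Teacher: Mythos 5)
First, a point of comparison: the paper does not prove Proposition \ref{speh} at all. It is listed in Appendix \ref{UOrb prerequisites} as one of the facts taken from the literature (Ginzburg's conjectures \cite{Ginzbconj}, \cite{Ginzeul}, proved for the discrete spectrum in \cite{JiangLiu} and extended in \cite{Liu}), with the remark that any missing case would be handled by ``an almost identical argument to (\cite{Ginzeul}, proposition 1)''. So your plan --- realize the Speh representation as a residue of an Eisenstein series and unfold Fourier coefficients against it, inducting on $b$ --- is indeed the route the paper points to, but what you have written is an outline of that route rather than a proof, and the places you gloss over are exactly where the work lies.

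Concretely, two gaps. (1) In the lower bound, ``the residue operation preserves this nonvanishing at the Speh point'' is not a legitimate step: nonvanishing of the open-cell term of the unfolded coefficient of the Eisenstein series at generic $s$ says nothing about its residue. You must show that the unfolded Euler product (a ratio of $L$-functions times a factorizable integral) actually has a pole at the Speh point, or argue differently, e.g.\ by relating the $(a^b)$-coefficient of the residue to a Whittaker coefficient of its constant term along the parabolic with Levi $GL_a^b$; likewise ``every other double-coset contribution vanishes by the standard root-exchange arguments'' is asserted, not proved, and for a degenerate coefficient such as the one attached to $(a^b)$ the non-open cosets do not vanish for free the way they do in the Whittaker case. (2) In the upper bound, the claim that every surviving contribution for $\F_\lambda$ with $\lambda>(a^b)$ reduces to an FC on the $GL_a$-factor with first part $\geq\lambda_1>a$ is, as you yourself concede, the technical heart, and it is not correct as a uniform one-line reduction: the double-coset and Jacquet-module analysis spreads the character over both blocks of the Levi, and the actual arguments in \cite{Ginzeul} and \cite{JiangLiu} need cuspidality of $\sigma$ along segments, root exchanges, and a more careful induction to force the vanishing. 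Your use of Proposition \ref{singleton} to dispose of orbits incomparable to $(a^b)$ is fine, but until (1) and (2) are supplied the proposal is a roadmap to the known proof, not a proof.
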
 
\begin{rem}
 From propositions \ref{singleton}, \ref{induction}, and \ref{speh} we can directly see what $\Om(\pi)$ is, for all $GL_n$-automorphic representations $\pi$. 
\end{rem}
\subsection{ Small generalizations of unipotent orbit Fourier coefficients}
\label{smallcsubsection}

The proposition that follows explains how $\RPn$ can be constructed from $\RPdnv{0}$ (or equivalently from $\RPunv{0}$) and is essential to our calculations of FCs.
\begin{prop}\label{smallc}
Let $\F^0$ be a $GL_n$-Fourier coefficient in $\RPdnv{0}$. Start producing from $\F^0$ other FCs by operations of the form:
\begin{enumerate} \item $\eu$. Here I will use the more standard name ``root exchange". \item $\co(w)$, which recall it means conjugation by a Weyl group element $w$ \item $\CTS\circ $. Whenever this is applied to a FC $\F^\prime$ it means $\CTS(\F^\prime)\circ \F^\prime$.
\end{enumerate}
  
Then among the FCs that we will obtain, will be all $\F$ with the following two properties: 
\begin{enumerate}
\item $\F\in\RP_{\nless}$ 
\item $\Phi(\F)=\Phi(\F^0)$
\end{enumerate}

\end{prop}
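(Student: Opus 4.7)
The plan is to prove this by induction on a combinatorial ``swap distance'' between the parabolic $P$ (with $\F\in\R(P)_{\nless}$) and $P^\downarrow$ (with $\F^0\in\R(P^\downarrow)_{\nless}$). Both parabolics share the same abstract Levi $M$ (determined by $\Phi(\F^0)=\Phi(\F)$), and $P$ can be obtained from $P^\downarrow$ by permuting the diagonal positions of the $GL$-factors of $M$. The swap distance is the minimum number of adjacent transpositions achieving this permutation, augmented by an extra contribution when $P$ is not of the monotone ``up'' or ``down'' type. In the base case $P=P^\downarrow$, the elements of $\Rnv{0}{P^\downarrow}$ with the prescribed root-system support lie, as discussed in \ref{UOrb prerequisites}, in a single open $M$-orbit, so $\F$ and $\F^0$ are conjugate under a Weyl element $w\in W_M$, and a single $\co(w)$ step finishes the base case.

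For the inductive step, given $\F\in\R(P)_{\nless}$ at distance $d\geq 1$, I would locate a neighbor parabolic $P'$ at distance $d-1$ and produce a partner $\F'\in\R(P')_{\nless}$ with $\Phi(\F')=\Phi(\F^0)$, together with a one-step procedure using $\eu$ and $\co$ (preceded, when necessary, by $\CTS\circ$) that takes $\F'$ to $\F$. The partner is built by setting $L_0:=U_P\cap U_{P'}$ and letting $X\subset U_P$, $Y\subset U_{P'}$ be the ``across-the-swap'' root subgroups; I prescribe the character of $\F'$ to agree with $\psi_\F$ on $L_0$ and to be trivial on $Y$. The commutator pairing $X\times Y\to\mathbb{C}$ induced by $\psi_\F|_{L_0}$ is nondegenerate---this follows from $\F\in\R(P)_{\nless}$ together with the structure of the block swap, which forces $[X,Y]\subset L_0$ and ensures that the support of $\psi_\F|_{L_0}$ is dual to the $X$-$Y$ pairing---so the exchange lemma of \ref{exchange} applies to $\F'$ and yields an FC whose domain is $X\cdot L_0=U_P$ and whose character restricts to $\psi_\F|_{L_0}$ on $L_0$ and is trivial on $X$; a subsequent $\co(w)$ with $w\in W_M$ permutes the character support inside $U_P$ to match $\psi_\F$ exactly, completing the step.

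The $\CTS\circ$ operation enters when $P$ is not of monotone type, so that the neighbor $P'$ differs from $P$ not only by a block swap but also by the trivial addition of a root subgroup; in this case I apply $\CTS\circ$ to $\F'$ before the exchange, using a one-dimensional $\C$ supported on the missing root subgroup and normalized by a one-parameter torus in $M$ acting with an open orbit, as required by Definition \ref{CTSdef}. The main obstacle I expect is verifying that the hypotheses of \ref{exchange} (nondegeneracy of the commutator pairing, and the containment $[X,Y]\subset L_0$) hold at every inductive step: this reduces to root-system calculations inside $M$ that follow from the $\R_{\nless}$ condition and the shape of the block swap, but requires a careful uniform argument across all swap types. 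A secondary concern is ordering the swaps so that every $\CTS\circ$ invocation lands on a root subgroup that does admit the required open-orbit torus; a greedy bubble-sort-style procedure, always swapping the pair of blocks nearest the extremity where $P$ and $P^\downarrow$ already agree, should suffice.
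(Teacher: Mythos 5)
There is a genuine gap, and it is at the very foundation of your induction. You assume that because $\Phi(\F)=\Phi(\F^0)$, the parabolic $P$ with $\F\in\R(P)_{\nless}$ and the parabolic $Q$ with $\F^0\in\Rnv{0}{Q}$ share the same abstract Levi, so that $P$ is reached from $Q$ by permuting the diagonal $GL$-blocks, and your whole ``swap distance'' induction lives inside this fixed family of parabolics. This is false. The condition $\F\in\R(P)_{\nless}$ only requires $D_\F=U_P$ and that the support roots be pairwise incomparable; it does not tie the block sizes of $P$ to the partition $\Phi(\F)$. For example, in $GL_4$ take $P$ with Levi $GL_2\times GL_2$ and $\psi_\F$ nontrivial exactly on $U_{(1,3)}$: then $\F\in\R(P)_{\nless}$ and $\Phi(\F)=(2,1^2)$, while $\F^0\in\RPdnv{0}$ for this orbit lives on the unipotent radical of the $(1,3)$-parabolic, whose Levi $GL_1\times GL_3$ is not a block permutation of $GL_2\times GL_2$ (and $\dim U_P=4\neq 3=\dim U_Q$; in larger examples the dimension gap is arbitrarily big). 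So the statement you must prove includes many $\F$ that your inductive scheme never reaches, and these are precisely the cases that carry the content of the proposition.

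Relatedly, your device for absorbing the mismatch --- a single $\CTS\circ$ on one ``missing'' root subgroup when $P$ fails to be of monotone type --- is far too weak: the discrepancy between $D_\F$ and $D_{\F^0}$ is in general a whole unipotent group, and it must be stripped off in stages. The paper's proof handles exactly these two points differently: it proves a strengthened statement and inducts on $n$, peeling off the last Levi block via the factorization $\F=\F_{\tilde{P}}\circ\F_{U_{P_1}}$, then conjugates so that $V(\F)=V(\F^0)$ sit in the same position, and finally runs a block-by-block procedure through the decompositions $N_i^P$ and $N_i^Q$ of $U_P/(U_P,U_P)$ and $U_Q/(U_Q,U_Q)$, in which each step is a root exchange between the sets $N_i^{P<Q}$ and $N_i^{Q<P}$ followed by a $\CTS\circ$ whose domain is the (generally higher-dimensional) subgroup generated by all non-exchanged roots; it also must allow the number of blocks of $P$ and $Q$ to differ by one ($k=k^0$ or $k^0+1$). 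Your local step (pairing $X\subset U_P$ against $Y\subset U_{P'}$ over $L_0=U_P\cap U_{P'}$) is close in spirit to one of these individual steps, but without the reduction that aligns $V(\F)$ with $V(\F^0)$ and without an induction that can change the block structure of the parabolic, the argument does not prove the proposition as stated.
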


\begin{proof} We will prove a stronger version of proposition \ref{smallc}. By doing this it was easier for me to find an inductive argument.\vspace{5pt}

\noindent\textbf{Strengthening of proposition \ref{smallc}}. \textit{Let everything be as in proposition \ref{smallc}. }

\textit{Consider any of the $\F$ of proposition 8. Let $P$ be the unique parabolic subgroup of $GL_n$, for which $\F\in\R(P)_{\nless}$. Let $GL_{m_P}$ be the component of the Levi of $P$ that is embedded in the lower right corner of $GL_n$. We define $P_1:=GL_{n-m_P}\times U_P$, were $GL_{n-m_P}$ is embedded in the upper left corner. Then the steps of the form $\eu$, $\CTS\circ$ and $c(w)$, that convert $\F^0$ into $\F$, can be chosen to be in $P_1$.}\vspace{5pt}

We do an induction on $n$. Consider a $GL_n$-FC $\F\in\R(P)_{\nless}$. Let $GL_{n_1}\times GL_{n_2}\times...\times GL_{n_k}\times GL_{m_P}$ be the Levi of $P$. It is assumed that $GL_{n_1}$ is in the upper left corner, and that as $i$ increases, the sequence $\{GL_{n_i}\}$ moves towards the lower right corner.  Let $\tilde{P}=GL_{n-m_P}\cap P$. We have 
$$\F=\F_{\tilde{P}}\circ\F_{U_{P_1}}. $$
By applying the inductive hypothesis on $\F_{\tilde{P}}$, we are reduced to the case in which:
\begin{enumerate}
\item $n_1<n_2<...<n_k. $  
\item $\F_{\tilde{P}}\in\RPdnv{0}$, where here $\F_{\tilde{P}}$ is treated as a $GL_{n-m_P}$-FC.
 \end{enumerate}
For $U_P/(U_P,U_P)$ consider the decomposition $N_1^P\times N_2^P\times...\times N_k^P$ that is described in definition \ref{U/UU} of \ref{Appen0}. For any upper triangular unipotent subgroup $N$ of $GL_n$, that is generated by root subgroups, we define $[N]$ to be the set of the root subgroups that are subsets of $N$. 

Consider the  set of positive root vector subgroups $V(\F) $ of the base for the root system $\Phi(\F)$. Let
 $$V(\F)=V_1\cup V_2\cup...V_r, $$  where each $V_i$ corresponds to an irreducible root system, and as $i$ increases the cardinality of $V_i$ decreases.

By conjugating $\F$ with an appropriate element $w\in GL_{n_1}\times GL_{n_2}\times...\times GL_{n_{k-1}}$, we are reduced to the case:
$$(r(V_i\cap [N_l])-r(V_j\cap[ N_l]))(i-j)>0\text{ for }l\leq k-1\text{ and all }i,j, $$ 
where by $r(*)$ we mean the number of the row to which the unique root subgroup of $*$ belongs.
By conjugating $\F$ with an appropriate element $w\in GL_{n_k}$ we are further reduced to the case in which the rows intersecting $N_k$ without intersecting $V(\F)$, are in the bottom of $N_k$.

Let $Q$ be the parabolic for which $\F^0\in\Rnv{0}{Q}$. Notice that due to the previous two reductions, the elements of $V(\F)$ are subgroups of $Q$. After conjugating $\F^0$ with an appropriate element in the Levi of $Q$, we are reduced to the case:

$$V(\F)=V(\F^0), $$   
where equality means not just isomorphism, but that they are embedded in $GL_n$ in the same position.

We will now prove that in the special case which is obtained from all the previous reductions, $\F^0$ will give $\F$ only by applying root exchanges, and $\CTS$-convolutions. We will not do more conjugations.  

Let $i$ be the number for which the first $i$ components of the Levi subgroups of $P$ and $Q$ are the same (we start the counting from the upper left corner). If $r$ is the last row that intersects with the first $i $ components of the Levi subgroups, define
 $$U_{P,1}:=U_P\cap \prod_{ r\geq i<j}U_{(i,j)}\qquad U_{P}^1:=U_P\cap \prod_{ r\leq i<j}U_{(i,j)}$$
 $$U_{Q,1}:=U_Q\cap \prod_{ r\geq i<j}U_{(i,j)}\qquad U_{Q}^1:=U_P\cap \prod_{ r\leq i<j}U_{(i,j)}. $$
 Then the identities $$\F=\F_{U_P^1}\circ \F_{U_{P,1}}\quad\text{ and }\quad\F^0=\F^0_{U_Q^1}\circ \F^0_{U_{Q,1}},  $$
 reduce the problem to the case that $i=0$. However In the reduced case $i=0$, we need to make sure that the root exchanges that we will do, are within the stabilizer of $U_{P,1}$. It turns out that they will be.
 
 So assume that $i=0$. For $U_Q/(U_Q,U_Q)$, consider the decomposition $N_1^Q\times N_2^Q\times...\N_{k^0}^Q$. Notice that $k=k^0$ or $k^0+1$.  
 
 \begin{Case}[$\mathbf{k=k^0}$] We define 
 $$N_i^{Q<P}:=([N_i^Q]-[N_i^P])\cap\{[U_a]:\exists b\text{ with }a<b\text{ and }U_b\subset N_i^P \} $$
 $$N_i^{P<Q}:=([N_i^P]-[N_i^Q])\cap\{[U_a]:\exists b\text{ with }a<b\text{ and }[U_b]\in [N_i^Q]\cap V(F)  \}. $$
 Let $\F^1:=\F$. The way $\F$ is obtained from $\F^0$, is described in $k$ steps, with the $i$-th
 step being the following:
 
 \begin{istep} In $\F^{i}$ exchange roots in $N_i^{P<Q}$ with all the roots of $N_i^{Q<P}$. Let $\F^{i+}$ be the FC that is obtained after the exchange. Let $N$ be the subgroup of $N_i^{P<Q}$ that is generated from the roots that were not exchanged. We have $\F^{i+}=\CTS\circ\F^{i+1}$, where $\F^{i+1}=\F^{i+}_{N\s D_{\F^{i+}}}$. Of course $N\s D_{\F^{i+}}$ is identified as a subset of $D_{\F^{i+}}$. 
 
 I mention here why step i must happen after step i-1. Consider the root subgroups that were exchanged or were part of the domain of $\CTS$ in step i-1. If certain among these root subgroups were present in step i, they would not allow the root exchange to happen.
 \end{istep}
 It turns out that $\F^{k+1}=\F^0.$
 \end{Case}
 
 \begin{Case}[$\mathbf{k=k^0+1}$]. Instead of starting with a root exchange, we start with a $\CTS\circ$ operation, and then we proceed similarly to Case($k=k^0$). The notations $\F^i$ and $N$, will be used again, but not for denoting the same objects as in the previous case. The notations $N_i^{Q<P}$ and $N_i^{P<Q}$ are defined as in the previous case.
 
 Let $\F_1:=\F_{N_1^{P<Q}\s D_{\F}}$. Notice that $\F=\CTS\circ\F^1$. We now proceed with $\F^1$ as in the case $k=k^0$:
 \begin{istep} In $\F^{i}$ exchange roots in $N_{i+1}^{P<Q}$ with all the roots of $N_i^{Q<P}$. Let $\F^{i+}$ be the FC that is obtained. Let $N$ be the subgroup of $N_{i+1}^{P<Q}$ that is generated from the roots that were not exchanged. We have $\F^{i+}=\CTS\circ\F^{i+1}$, where $\F^{i+1}=\F^{i+}_{N\s D_{\F^{i+}}}$. 
 \end{istep}
 It turns out that $\F^k=\F^0.$
 \end{Case} 
  \end{proof}

In applications to convoluted FCs that are more complicated from the ones that appeared in this article, I believe it will be useful to explore in what directions proposition \ref{smallc} can be strengthened. Here I only mention a small conjecture  relevant to this search. 
\begin{smallc}  Let $\F\in \RNn{U_n}$. Let $\pi$ be a $GL_n$-automorphic representation. The following two statements are true: \begin{enumerate}
\item   $\Om(\pi)=\Phi(\F)\implies F(\pi)\neq 0$. 
\item $(\Om(\pi)\text{ is smaller or unrelated to }\Phi(\F))\implies\F(\pi)=0$.
\end{enumerate} 
\end{smallc}

\begin{proof}[Example] Here is an example demonstrating that this conjecture does not hold if $\RNn{U_n}$ is replaced by $\R_{\nless}$.
 $$\F:=\begin{array}{|c|c|c|c|}\hline 1&&&\h
 &1&\x&\z\h
 &&1&\x\h
 &&&1\h\end{array}
 \eq{1}{\e}\F_0+\sum_{a\in F^*}\F_a,  $$ 
 where $\e$ is the Fourier expansion along $U_{(1,4)}(F)\s U_{(1,4)}(\A)$. We will only need to calculate $\F_a$ for $a\in F^*$. We have:
 $$\F_a:=\begin{array}{|c|c|c|c|}\hline 1&&&\x\h
  &1&\x&\z\h
  &&1&\x\h
  &&&1\h\end{array}\Sim{1}{c}
 \begin{array}{|c|c|c|c|}\hline 1&&&\x\h
   &1&\x&\z\h
   &&1&\z\h
   &&&1\h\end{array}\Sim{2}{\eu}(2^2).    $$  This means that $\F$ doesn't vanish for the $\pi$ with $\Om(\pi)=(2^2)$. Since $(2^2)<(3,1)=\Phi(\F)$, the  previous conjecture is not extended in this context. 
  \end{proof}
 
 I will finish this article with an elementary Lemma that even though it was not used anywhere, it will be needed in sequels to this article. This lemma implies that when for a FC $\F$, we search for an expression in $\F_{\RPn}$, it is enough to find an expression in $\F_{\R(P)}$.
 
The lemma will be expressed in terms of characters because conjugations will be the only operation that is used. Let $\psi$ be a $\R(P)$ $GL_n$-FC. By this we mean that $\F_\psi\in\R(P)$ (recall definition \ref{characters}) in \ref{Appen0}). We  prove a lemma stating that there is $l\in GL_n(F)$ for which $\psi^l\in\Rn{P}$. An $l$ as in the previous sentence, will stabilize the unipotent radical of $P$, which implies $l\in P(F)$. I will prove a stronger proposition because only then I can apply the best inductive argument that I found. I am open to the possibility that a shorter proof can be given if we use basic knowledge about algebraic group actions on varieties.

\begin{lem}\label{conjugation}Let $\psi$ be an $R(P)$ $GL_n$-additive character. Let $M$ be the (standard as always) Levi factor of $P$.  Then $M$ is a product of General linear groups, and let be $GL_{n_1}$ be the one among them that occurs in the upper left. Let $p:M\rightarrow GL_{n_1}$ be the projection to $GL_{n_1}$. Finally define $M_1:=\{g\in M: p(g)\text{ is unipotent upper triangular}\}$. Then there is $l\in M_1$ such that $\psi^l\in \Rn{P}$.

\end{lem}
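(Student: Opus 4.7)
The plan is to proceed by induction on $n$ and normalize $\psi$ one block at a time. First, since $\psi$ is a homomorphism into the abelian group $\mathbb{C}^\times$, it factors through the abelianization $U_P/[U_P,U_P] \cong \bigoplus_{i=1}^{k-1} N_{i,i+1}$, where $N_{i,i+1}$ denotes the product of root subgroups $U_{(r,s)}$ with $r$ in the $i$-th Levi block and $s$ in the $(i+1)$-th (the blocks $N_{i,j}$ with $j-i\ge 2$ lie in the commutator subgroup). So $\psi$ is encoded by a tuple $(X_1,\ldots,X_{k-1})$, with $X_i$ the matrix representing $\psi|_{N_{i,i+1}}$. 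A quick check on the roots $e_r - e_s$ shows that any two root subgroups lying in distinct blocks $N_{i,i+1}$ and $N_{i',i'+1}$ with $i \ne i'$ correspond to incomparable roots, so the $\nless$ condition decouples: it suffices to ensure, for each $i$ separately, that the nonzero entries of $X_i$ form an antichain in the order $(r,s) < (r',s') \iff r'\le r,\ s' \ge s$ within its block.

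For the inductive step I first normalize $X_1$ using the full $GL_{n_2}$-factor of $M_1$ together with $U_{n_1}$. Concretely, apply $g_2 \in GL_{n_2}$ to bring $X_1$ to reduced row echelon form with pivots at columns $p_1 < p_2 < \cdots < p_r$, then apply upper unipotent column operations from $U_{n_1}$ to clear every non-pivot-column entry (this is possible because each such entry sits strictly to the right of a pivot in the same row, so the required column operation is upper unipotent). The resulting $X_1$ has nonzero columns equal to $e_1,\ldots,e_r$ at positions $p_1,\ldots,p_r$, giving precisely the antichain of nonzero positions $(k,p_k)$ demanded by $\nless$. The key claim is that the $M$-stabilizer of this normalized $X_1$ contains the full $U_{n_2}$-factor of $M_1$: for each $u_2 \in U_{n_2}$, the equation $u_2 X_1 = X_1 u_1$ can be solved for some $u_1 \in U_{n_1}$ by matching coefficients of $u_2 e_k = e_k + \sum_{i<k}(u_2)_{i,k}e_i$ against the columns of $X_1 u_1$; the strict monotonicity $p_1 < \cdots < p_r$ ensures the required assignments $(u_1)_{p_l,p_k} = (u_2)_{l,k}$ are consistent with $u_1$ being upper triangular unipotent, and the remaining entries of $u_1$ may be taken to have $1$'s on the diagonal and $0$'s elsewhere.

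Granting the stabilizer claim, apply the inductive hypothesis to the parabolic $\tilde{P}\subset GL_{n-n_1}$ with Levi $GL_{n_2} \times \cdots \times GL_{n_k}$, applied to the (modified) character $\psi|_{U_{\tilde{P}}}$ obtained after the first-step conjugation. This produces $\tilde{l} = (u_2, g_3, \ldots, g_k) \in \tilde{M}_1 = U_{n_2} \times GL_{n_3} \times \cdots \times GL_{n_k}$ which brings all the remaining blocks $X_2,\ldots,X_{k-1}$ into $\nless$ form. Lift $\tilde{l}$ to $l' = (u_1(u_2), u_2, g_3, \ldots, g_k) \in M_1$, where $u_1(u_2) \in U_{n_1}$ is the element produced by the stabilizer claim so that $l'$ stabilizes the normalized $X_1$. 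Composing the first-step element with $l'$ yields the required $l \in M_1$ satisfying $\psi^l \in \Rn{P}$.

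The hard part will be the stabilizer claim, and its content depends crucially on choosing this particular standard form for $X_1$, in which the nonzero columns are the consecutive standard basis vectors $e_1,\ldots,e_r$; other antichain forms (for instance, one whose nonzero columns are $e_{s_1},\ldots,e_{s_r}$ with $s_j$ not matching the pivot count) can yield a stabilizer in $U_{n_2}$ that is a proper subgroup, which would break the induction. The restriction in $M_1$ to upper triangular unipotent in the top Levi factor $GL_{n_1}$ is exactly what both forces this standard form on $X_1$ (only upper unipotent column operations are used, preserving the flag of pivot columns) and ensures that the analogous condition for $\tilde{P}$ (with upper unipotent in $GL_{n_2}$) is available as the inductive hypothesis: this is why one must prove the stronger statement with $M_1$ rather than a larger group, as the author remarks.
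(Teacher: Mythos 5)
Your proposal is correct in substance and follows essentially the same strategy as the paper's proof: restrict to the superdiagonal blocks (the abelianization), normalize the block $X_1$ between the first two Levi factors using the full $GL_{n_2}$ and the constrained unipotent part of $GL_{n_1}$, show that every $u_2\in U_{n_2}$ admits a companion $u_1\in U_{n_1}$ stabilizing the normalized $X_1$ (your stabilizer claim is exactly the paper's section $s:U_{n_2}\rightarrow U_{n_1}\times U_{n_2}$ fixing $\psi_{N^1}$), and then lift the element produced by the inductive hypothesis for the parabolic of $GL_{n-n_1}$ — the role you assign to $M_1$ in making the induction close up is precisely the paper's. The genuine difference is local: where you normalize $X_1$ directly by an echelon-form computation (row reduction by $GL_{n_2}$, then clearing by the unipotent factor), the paper instead proves a separate $k=2$ case by induction on $n$, with a case split on whether the character meets the first row, and only then runs the induction on the number of Levi blocks; your version is more concrete and arguably sharper, since the justified echelon pattern you insist on is exactly what makes the $U_{n_2}$-section exist (a mere ``monotone'' antichain, which is all the paper explicitly records for its set $A$, is not quite enough, as a single nonzero entry in a non-initial column already shows). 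Two small corrections you should make: the order the paper puts on roots ($a<b$ iff $b-a$ is a positive root) compares two entries of a block only when they share a row or a column, so membership in $\Rn{P}$ asks only that each $X_i$ be a partial-permutation pattern — your coarser order and the resulting ``strictly monotone antichain'' target is stronger than needed (harmless, since you achieve it); and your row/column bookkeeping for $X_1$ is flipped in places (e.g. the equation $u_2X_1=X_1u_1$ does not typecheck with $X_1$ of size $n_1\times n_2$) — once you fix a consistent convention in which $GL_{n_2}$ acts on one side of the block and the unipotent part of $GL_{n_1}$ on the other, the operations you describe and the assignment $(u_1)_{p_l,p_k}=(u_2)_{l,k}$ are the right ones.
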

\begin{proof}Let  $U_{n_1}$ be the $GL_{n_1}$ subgroup of unipotent upper triangular matrices. The decomposition for the Levi of $P$ is $M=GL_{n_1}\times GL_{n_2}\times ...\times GL_{n_k}$ (in its obvious embedding to $GL_n$). 

Let $N$ be the unipotent radical of $P$. For $i=1,2,...k-1$, Let $M_{n_i\times n_{i+1}}$ be embedded as the subgroup of $N$  characterized by that $GL_{n_i}\times GL_{n_{i+1}}M_{n_i\times n_{i+1}}$ is a parabolic for $GL_{n_i+n_{i+1}}$.

If for a group of the form $U_a\subset N$ we have $\psi(U_a)\neq\{1 \}$, then $U_a$ must be a subset to one among the $M_{n_i\times n_{i+1}}$.

We will first prove the proposition for $k=2$ by doing an induction on $n$. Then we prove the proposition in general by an induction on $k$.\\
\textbf{Step 1: $k=2$}

We first deal with the case in which for all $j>n_1$ we have $\psi(U_{1,j}(\A))=\{1\}$.  Let $\C$ be the constant term in the abelian subgroup of $N$  which is generated by the  $U_{1,j}$  with $j>n_1$. Let $\psi_{N^1}$ be the restriction of $\psi$ to $N^1=N\cap GL_{n-1}$, where here $GL_{n-1}$ is embedded in the lower right corner of $GL_n$. Then we have that $\psi=\psi_{N^1}\circ\C $. Notice that $\psi_{N^1}$ is of the form $R(P^1)$ for an appropriate parabolic of $GL_{n-1}$. Let $M^1$ be the Levi for $P^1$. From the induction hypothesis the proposition is correct for $\psi_{N^1}$, so there is an $l^1\in M_{1}^1$ such that $(\psi_{N^1})^{l^1}\in R(P^1)_{\nless}$. We have 
\begin{equation}\label{appen1}\psi^{l^1}=(\psi_{N^1}\circ\C)^{l^1}=(\psi_{N^1})^{l^1}\circ\C^{l^1}=(\psi_{N^1})^{l^1}\circ\C, \end{equation}
so we can choose $l=l^1$.

Now we deal with the case in which there is a $j>n_1$, such that $\psi(U_{1,j})\neq\{1\}$. We start with some conjugations that reduce the problem to a special case.
\begin{enumerate} 
\item By conjugating with an appropriate element in the Weyl group of $GL_{n_2}$ we are reduced to the case that $j$ can be chosen to be $n_1+1$. 
\item By conjugating with an appropriate element in the abelian lower triangular unipotent subgroup of $GL_{n_2}$ $Y:=\prod_{2\leq k\leq n_2}U_{(n_1+k,n_1+1)}$, we are reduced to the case that $n_1+1$ is the \textbf{unique} $j$ with the property $\psi(U_{1,j})\neq\{1\}$.
\item By conjugating with an appropriate element, in the abelian upper triangular unipotent subgroup of $GL_{n_1}$  $X:=\prod_{2\leq k\leq n_1} U_{(1,k)}$ , we are reduced to the case:
  $\psi(U_{k,n_1+1})=1$ for $2\leq k\leq n_1$. 
\end{enumerate}
Define the abelian unipotent groups $N^0:=\prod_{n_1+1\leq j}U_{(1,j)}\prod_{2 \leq i<n_1}U_{(i,n_1+1)} $, $$N^2:=\prod _{\tiny{\begin{array}{c}n_1+2\leq k\leq n_1+n_2\\ 2\leq j\leq n_1\end{array}}}U_{(k,j)}$$

 Then $\psi=\psi_{N^2}\circ\psi_{N^0}$. Notice that $N^2$ is an $R(P^2)$ for a parabolic of an appropriate copy of $GL_{n-2}$ inside $GL_n$. So $\psi_{N^2}$ satisfies the inductive hypothesis, and let $l^2$ be in $M_1^2$ such that $\psi_{N^2}^{l^2}\in R(P^2)_{\nless}$
 
 In (\ref{appen1}) $l^1$ fixed $\C$. Here similarly $l^2$ is fixing $\psi_{N^0}$ and this is finishing the inductive step.\\
 \textbf{General case}
 We will call $N^1$ the copy of $M_{n_1+n_2}$ that we have defined. Let $$N^2:=N\cap\prod_{\tiny{\begin{array}{c}i>n_1\\j>n_1+n_2\end{array}}}U_{i,j} \quad\text{ and }\quad N^0:=\prod_{\tiny{\begin{array}{c}j>n_1+n_2\\i\leq n_1\end{array}}}U_{(i,j)}$$ We have that $N=(N^1\times N^2)\rtimes N^0$, and that $\psi=\psi_{N^1}\circ\psi_{N^2}\circ\C$, where by $\C$ we denote the $\psi_{N^0}$ since it happens to be constant. 
 
 For $P^1:=(GL_{n_1}\times GL_{n_2})$ we have $N^1=R(P^1)$ since $P^1$ is a  $GL_{n_1+n_2}$-parabolic. The inductive Hypothesis for $k=2$, produces an $l^1$ such that $\psi_{N^1}^{l^1}\in\Rn{P^1}$. Also if we replace $l^1$, with $l^1w$ for an appropriate $w\in W_{GL_{n_2}}, $ we get for the set $A:=\{(i,j):\psi_{N^1}^{l^1}(U_{(i,j)})\neq{1} \}$ that:
 $$\text{If } (i_1,j_1)\text{ and }(i_2,j_2)\text{ are two different elements of } A \text{, then } (i_1-i_2)(j_1-j_2)>0.$$ 
 Let $p_2$ be the projection from $U_{n_1}\times U_{n_2}$ to $U_{n_2}$. Then the property of $A$ implies that $p_2 $  has a section $s: U_{n_2}\rightarrow U_{n_1}\times U_{n_2} $ with the property: $$s(U_{n_2})(F)\text{ is fixing }\psi_{N^1}. $$
 
  By the induction hypothesis applied to $\psi_{N_2}^{l^1}$, we obtain an element $l^2\in M_1^2$ so that \begin{equation}\label{RPnless2}\psi_{N_2}^{l^1l^2}\in\Rn{P^2}.\end{equation} Now extend the section $s$ trivially to a function $s^\prime M_1^2\rightarrow M_1$.  Then notice that (\ref{RPnless2}) remains correct if $l^2$ is replaced by $s^\prime(l^2)$. We can now see that for $l:=l_1s^\prime (l_2)$ we have $\psi^l\in \Rn{P}$, and this finishes the proof. In more detail:
  $$\psi^{l}=\psi_{N^1}^l\circ\psi_{N^2}^l\circ\C^l=\psi_{N^1}^{l_1}\circ\psi_{N^2}^l\circ\C \in\Rn{P}.$$  
 
\end{proof}

\end{document}